\newtheorem{theorem}{Theorem}[section]
\newtheorem{proposition}{Proposition}[section]
\theoremstyle{remark}
\newtheorem{remark}{Remark}[section]
\theoremstyle{definition}
\newcommand{\set}[1]{\left\{#1\right\}}
\newcommand{\ind}[1]{\mathbf{1}_{#1}}
\newcommand{\abs}[1]{\left\vert#1\right\vert}
\newcommand{\pr}{\mathsf P}
\newcommand{\ex}[1]{\mathsf{E}\left[\,#1\,\right]}
\newcommand{\R}{{\mathbb R}}
\newcommand{\eps}{\varepsilon}
\newcommand{\la}{\lambda}
\newcommand{\lt}{\left}
\newcommand{\rt}{\right}
\newcommand{\var}{\mathsf{var}\,}
\begin{document}

\begin{opening}
\title{Asymptotic behavior of mixed power variations and statistical estimation in mixed models\thanks{This work has been partially supported by the Commission of the European Committees Grant PIRSES-GA-2008-230804 within the program ``Marie Curie Actions''. The authors are also grateful to Rim Touibi for her careful reading of the manuscript.}}
\runningtitle{Asymptotic behavior of mixed power variations and statistical estimation in mixed models}

\author{Marco \surname{Dozzi}\email{marco.dozzi@univ-lorraine.fr}}
\institute{
Institut Elie Cartan\\
Universit\'e de Lorraine\\
Campus-des-Aiguillettes\\
BP 70239\\
F-54506 Vandoeuvre-les-Nancy Cedex\\
France
}

\author{Yuliya  \surname{Mishura}\email{myus@univ.kiev.ua}}
\author{Georgiy \surname{Shevchenko}\email{zhora@univ.kiev.ua}}%}

\institute{National Taras Shevchenko University of Kyiv\\
Department of Probability Theory, Statistics and Actuarial Mathematics\\
Volodymirska 60\\
01601 Kyiv
Ukraine}

\begin{abstract}
We obtain results on both weak and almost sure asymptotic behaviour of power variations of a linear combination of independent Wiener process and fractional Brownian motion. These results are used to construct strongly consistent parameter estimators in mixed models.
\end{abstract}

\keywords{power variation, fractional Brownian motion, Hurst parameter, Wiener process, consistent estimator}
\classification{MSC 2010}{60G22, 62M09, 60G15, 62F25}
\end{opening}

\section{Introduction}
A fractional Brownian motion (fBm) is frequently used to model short- and long-range dependence. By definition, an fBm with Hurst parameter $H\in(0,1)$ is a centered Gaussian process $\set{B^H_t,t\ge 0}$ with the covariance function
\begin{equation*}
\ex{B_t^HB_s^H} = \frac12\left(t^{2H}+s^{2H}-\abs{t-s}^{2H}\right). 
\end{equation*}
For $H>1/2$, an fBm has a property of long-range dependence; for $H<1/2$, it is short-range dependent and, in fact, is counterpersistent, i.e.\ its increments are negatively correlated. For $H=1/2$, an fBm is a standard Wiener process.

Two important properties of an fBm are the stationarity of increments and self-similarity. However, these properties restrict applications of an fBm, and recently so-called multifractional processes gained huge attention. Multifractionality can consist both in dependence of memory depth and regularity of process on the time instance and on the time scale. In this paper, we are dealing with the latter kind of multifractionality, where the properties of a process depend on the size of the time interval, on which the process is considered. In other words, we are considering processes, which are not inherently self-similar. A simplest approach is to consider a linear combination of independent fBms with different Hurst parameters. 

Here we will concentrate on the case where we have only two fBms and one of them has Hurst parameter equal to $1/2$, simply put, it is a Wiener process. So we consider a process 
\begin{equation} \label{MtH}
M_t^H = a B_t^H + b W_t, t\ge 0
\end{equation}
where $a$ and $b$ are some non-zero coefficients. Such mixed models and their applications where considered in many papers, see \cite{andromis,cheredito,filatova,misshev,bookmyus}. 

The main aim of this paper is statistical identification of model \eqref{MtH}, i.e.\ the statistical estimation of the model parameters. The principal attention will be given to the estimation of $H$, though we will also present estimators for $a$ and $b$. Our secondary goal is to study both weak and almost sure asymptotic behaviour of mixed power variations.

In the ``pure'' fBm case, there exist several methods to estimate the Hurst parameter, an extensive overview of which is given in \cite{coeurjolly}. The most popular methods are based on quadratic and, more generally, higher power variations of the process. A huge literature is devoted to such questions, we will cite only few: asymptotic behaviour of power variations and, more generally, of non-linear transformations of stationary Gaussian sequences is studied in \cite{breuermajor,dobrushinmajor,giraitissurgailis,taqqu}, and stochastic estimation for fBm and multifractional processes with the help of power variations, in \cite{benassi,coeurjolly2,coeurjolly3,girrobsur,istaslang,taqqutev}. Weighted power variations serving similar purposes for stochastic differential equations driven by fBm, were studied in \cite{nourdin,nourdinnularttudor}.

The only papers concerned with parameter estimation in the mixed model are \cite{kleptsyna,kozamis,kitaycy, filatova}, but they address questions different from the one we are interested in.  Namely, we aim at estimating the parameters of the process \eqref{MtH} based on its single observation on a uniform partition of a fixed interval. To this end, we use power variations of this process. 
We remark that, in contrast to the pure fractional case, there is no self-similarity property in the mixed model \eqref{MtH}, so we cannot directly apply the results of \cite{breuermajor,dobrushinmajor,giraitissurgailis,taqqu} on the asymptotic behaviour of sums of transformed stationary Gaussian sequences. For this reason we need to study the asymptotic behaviour as $n\to\infty$ of ``mixed'' power variations of the form 
\begin{equation*} %\label{mixed-var}
\sum_{i=0}^{n-1} \left(W_{(i+1)/n}-W_{i/n}\right)^p\left(B^H_{(i+1)/n}-B^H_{i/n}\right)^r,
\end{equation*}
involving increments of independent fBm $B^H$  and Wiener process $W$, where $ p\ge 0, r\ge 0$ are fixed integer parameters. For statistical purposes, in order to construct strongly consistent estimators, we need mainly the almost sure behavior of the power variations. However, we also study their weak behavior, which is of independent interest.

The paper is organized as follows. Section~\ref{mixedvarsec} contains results on the asymptotic behaviour of mixed power variations.  These results are used in Section~\ref{statsec} to construct strongly consistent estimators of parameters $H,a,b$ in model \eqref{MtH} and study asymptotic normality of the estimators of $H$. 
Finally, in Section~\ref{simsec} we present simulation results to illustrate quality of the estimators provided.

\section{Asymptotic behaviour of mixed power variations}\label{mixedvarsec}

Let $W=\{W_t, t\ge 0\}$ be a standard Wiener process and $B^H=\{B^H_t, t\ge 0\}$ be an independent of $W$ fBm with Hurst parameter $H\in(0,1)$ defined on a complete  probability space $(\Omega, \mathcal{F}, P)$.  

For a function $X\colon[0,1]\rightarrow \R$ and integers $n\ge 1$, $i=0,1,\dots,n-1$ we denote $\Delta_i^n X= X_{(i+1)/n}-X_{i/n}$. 
In this section we will study the asymptotic behavior as $n\to\infty$ of the following mixed power variations
$$
\sum_{i=0}^{n-1} \left(\Delta_i^n W\right)^p\left(\Delta_i^n B^H\right)^r,
$$
where $p\ge 0$, $r\ge 0$ are fixed integer numbers. Thanks to self-similarity of $B^H$ and $W$, the sequence $\set{(n^{1/2} \Delta_i^n W,n^{H}\Delta_i^nB^H),0\le i\le n-1}$ is equivalent in law to $\set{(\xi_i, \zeta_i),0\le i\le n-1}$, where $\set{\xi_j,j\ge 0}$ is a sequence of independent standard Gaussian random variables,  $\set{\zeta_j,j\ge 0}$ is an independent of $\set{\xi_j,j\ge 0}$ stationary sequence of standard Gaussian variables with the covariance  $$\rho_H(m)=\ex{\zeta_0 \zeta_m}= \ex{B_1^H (B_{m+1}^H - B_m^H)}=\frac12\lt(\abs{m+1}^{2H}+\abs{m-1}^{2H}\rt)
-\abs{m}^{2H}.
$$
Therefore, by the ergodic theorem, 
\begin{equation*} %\label{ergodic}
n^{rH+p/2-1}\sum_{i=0}^{n-1} \left(\Delta_i^n W\right)^p\left(\Delta_i^n B^H\right)^r\to \mu_p\mu_r,\ n\to\infty,
\end{equation*}
a.s., where for an integer $m\ge 0$
$$
\mu_m =\ex{N(0,1)^m} = (m-1)!!\ind{m\text{ is even}}
$$
is the $m$th moment of the standard Gaussian law. 
So it is natural to study  centered sums of the form
$$
S^{H,p,r}_n=\sum_{i=0}^{n-1} \left(n^{rH+p/2}\left(\Delta_i^n W\right)^p\left(\Delta_i^n B^H\right)^r
- \mu_p\mu_r \right).
$$

The following theorem summarizes the  limit behaviour of $S_n^{H,p,r}$. We remark that some (but not all) of the results can be obtained from the limit theorems for stationary Gaussian sequences of vectors, see e.g.\ \cite{arcones}. However, we believe that our approach (using one-dimensional limit theorems) is more accessible and leads quicker to the desired results.
\begin{theorem} If $p$ and $r$ are even, $r\ge 2$, then 
\begin{itemize}
\item for $H\in(0,3/4)$
\begin{equation}\label{rpevenH<3/4}
n^{-1/2}{S_n^{H,p,r}}  \Rightarrow N(0,\sigma^2_{H,r}\mu^2_p+\sigma^2_{p,r}),\ n\to\infty,
\end{equation}
where
\begin{gather*}
\sigma^2_{H,r} =  \sum_{l=1}^{r/2}\frac{(l!)^2}{(2l)!((r-2l)!!)^2}
\sum_{m=-\infty}^{\infty}\rho_H(m)^{2l},\quad
\sigma^2_{p,r} = \mu_{2r}\left(\mu_{2p}-\mu_p^2\right);
\end{gather*}
\item for $H=3/4$
\begin{equation}\label{rpevenH=3/4}
\frac{S^{3/4,p,r}_n}{\sqrt{n\log n}}\Rightarrow N(0,\sigma^2_{3/4,r}\mu^2_p+\sigma^2_{3/4,r}),\ n\to\infty,
\end{equation}
where
$\sigma_{3/4,r} = 3r(r-1)/4$;
\item for $H\in(3/4,1)$
\begin{equation}\label{rpevenH>3/4}
n^{1-2H}S_n^{H,p,r} \Rightarrow \zeta_{H,p,r},\ n\to\infty,
\end{equation}
where $\zeta_{H,p,r}$ is a special ``Rosenblatt'' random variable.
\end{itemize}

If $p$ is odd, then for any $H\in(0,1)$
\begin{equation}\label{podd}
n^{-1/2}S_n^{H,p,r} \Rightarrow N(0,\mu_{2p}\mu_{2r\vphantom{p}}).
\end{equation}

If $p$ is even and $r$ is odd,  then 

\begin{itemize}
\item for $H\in(0,1/2]$
\begin{equation}\label{roddH<1/2}
n^{-1/2} S_n^{H,p,r} \Rightarrow N(0,\sigma^2_{H,r}\mu_p^2+\sigma^2_{p,r}),\ n\to\infty,
\end{equation}
where $\sigma_{H,1}=0$, 
$$\sigma^2_{H,r} =  \sum_{l=1}^{(r-1)/2}\frac{(r!)^2}{(2l+1)!((r-2l-1)!!)^2}
\sum_{m=-\infty}^{\infty}
\rho_H(m)^{2l+1},\quad r\ge 3;
$$
\item for $H\in(1/2,1)$
\begin{equation}\label{roddH>1/2}
n^{-H} S_n^{H,p,r} \Rightarrow N(0,\mu_p^2\mu^2_{r+1}),\ n\to\infty.
\end{equation}
\end{itemize}\end{theorem}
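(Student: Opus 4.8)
The plan is to work from the representation established above, namely that $S_n^{H,p,r}$ has the same law as $\sum_{i=0}^{n-1}\bigl(\xi_i^p\zeta_i^r-\mu_p\mu_r\bigr)$, with $\set{\xi_i}$ i.i.d.\ standard Gaussian and $\set{\zeta_i}$ an independent standard stationary Gaussian sequence of covariance $\rho_H$, and to peel off the randomness carried by the $\xi_i$. From the identity $\xi_i^p\zeta_i^r-\mu_p\mu_r=(\xi_i^p-\mu_p)\zeta_i^r+\mu_p(\zeta_i^r-\mu_r)$ one obtains
\begin{equation*}
S_n^{H,p,r}\stackrel{d}{=}A_n+\mu_p B_n,\qquad A_n=\sum_{i=0}^{n-1}(\xi_i^p-\mu_p)\zeta_i^r,\qquad B_n=\sum_{i=0}^{n-1}(\zeta_i^r-\mu_r).
\end{equation*}
Since $\ex{\xi_i^p-\mu_p}=0$ and the two families are independent, $A_n$ and $B_n$ are uncorrelated, and I would treat them by completely different tools: $B_n$ is a centered nonlinear functional of a \emph{scalar} stationary Gaussian sequence, to which the classical one-dimensional Breuer--Major-type theorems apply, while $A_n$, conditionally on $\mathcal{G}:=\sigma(\zeta_j:j\ge 0)$, is a sum of \emph{independent} centered variables to which a conditional Lindeberg--Lyapunov CLT applies.

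For $B_n$ I would use the Hermite expansion $\zeta_i^r-\mu_r=\sum_{l\ge q}c_l H_l(\zeta_i)$, whose rank is $q=1$ for odd $r$ (with $c_1=\ex{\zeta_0^{r+1}}=\mu_{r+1}$) and $q=2$ for even $r$, the $c_l$ being the explicit combinatorial coefficients $c_l=r!\,(2^{(r-l)/2}((r-l)/2)!\,l!)^{-1}$. The behaviour then depends only on the summability of $\rho_H(m)^q$, i.e.\ on the position of $H$ relative to $3/4$ (even $r$) or $1/2$ (odd $r$). When $\sum_m\rho_H(m)^q<\infty$, Breuer--Major gives $n^{-1/2}B_n\Rightarrow N\bigl(0,\sum_{l\ge q}c_l^2\,l!\sum_m\rho_H(m)^l\bigr)$, and a rearrangement of this series produces the constant $\sigma_{H,r}^2$; for odd $r$ the first-chaos contribution $c_1^2\sum_m\rho_H(m)$ is absent, since $\sum_m\rho_H(m)=0$ for $H<1/2$ (the second-difference sum telescopes), which is why $\sigma_{H,1}=0$. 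At the critical value $H=3/4$ with even $r$, only the rank-two component survives in the limit and the logarithmically normalised Breuer--Major theorem gives the convergence of $(n\log n)^{-1/2}B_n$ to a centered Gaussian (variance read off from $c_2$ and $\lim_n(\log n)^{-1}\sum_{|m|<n}\rho_{3/4}(m)^2$). For $H>3/4$ with even $r$, the rank-two component still dominates all higher ones and $n^{1-2H}B_n$ converges to a multiple of the Rosenblatt random variable. For $H>1/2$ with odd $r$, the rank-one component dominates, and since the partial sum $\sum_{i=0}^{n-1}\zeta_i$ is distributed exactly as $B_n^H$, i.e.\ as $N(0,n^{2H})$, one gets $n^{-H}B_n\Rightarrow N(0,\mu_{r+1}^2)$.

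For $A_n$, conditionally on $\mathcal{G}$ the summands $(\xi_i^p-\mu_p)\zeta_i^r$ are independent, centered, with total conditional variance $v_n=(\mu_{2p}-\mu_p^2)\sum_{i=0}^{n-1}\zeta_i^{2r}$; by the ergodic theorem $v_n/n\to(\mu_{2p}-\mu_p^2)\mu_{2r}=\sigma_{p,r}^2$ a.s., and a Lyapunov condition holds a.s.\ because $\sum_{i=0}^{n-1}\abs{\zeta_i}^{(2+\delta)r}=O(n)$ a.s.\ while $v_n^{1+\delta/2}$ grows like $n^{1+\delta/2}$; hence $n^{-1/2}A_n\Rightarrow N(0,\sigma_{p,r}^2)$ conditionally on $\mathcal{G}$ for a.e.\ realisation, and, the limit being deterministic, also unconditionally. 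For odd $p$ this already proves \eqref{podd}, since then $\mu_p=0$, $S_n^{H,p,r}\stackrel{d}{=}A_n$ and $\mu_{2p}-\mu_p^2=\mu_{2p}$. In the remaining cases I would glue the two pieces together. When the scaling is $n^{-1/2}$ (i.e.\ $p,r$ even with $H<3/4$, or $p$ even and $r$ odd with $H\le 1/2$), $B_n$ is $\mathcal{G}$-measurable and $\ex{e^{\mathrm{i}sn^{-1/2}A_n}\mid\mathcal{G}}\to e^{-s^2\sigma_{p,r}^2/2}$ a.s.\ and boundedly, so dominated convergence gives $\ex{e^{\mathrm{i}sn^{-1/2}A_n+\mathrm{i}tn^{-1/2}B_n}}\to e^{-s^2\sigma_{p,r}^2/2}\,\ex{e^{\mathrm{i}tN(0,\sigma_{H,r}^2)}}$; thus $n^{-1/2}(A_n,B_n)$ converges to a pair of independent Gaussians and \eqref{rpevenH<3/4}, \eqref{roddH<1/2} follow. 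When the scaling is slower than $n^{-1/2}$ — namely $(n\log n)^{-1/2}$ at $H=3/4$, $n^{1-2H}$ for $H>3/4$, and $n^{-H}$ for odd $r$ with $H>1/2$ — the bound $\ex{A_n^2}=\sigma_{p,r}^2\,n$ shows the rescaled $A_n$ tends to $0$ in $L^2$, so $S_n^{H,p,r}$ is asymptotically $\mu_p B_n$ and \eqref{rpevenH=3/4}, \eqref{rpevenH>3/4}, \eqref{roddH>1/2} follow from the limits for $B_n$ above.

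I expect the main obstacle to be the treatment of $A_n$: proving the conditional CLT with enough uniformity (the a.s.\ ergodic-theorem bounds behind Lyapunov's condition) and then upgrading the a.s.\ conditional convergence to the joint weak convergence of $(A_n,B_n)$. A secondary, purely computational point is turning the Hermite-coefficient series into the stated closed forms for $\sigma_{H,r}^2$ and verifying the vanishing of the first-chaos term for odd $r$ with $H<1/2$. By contrast, the long-memory and critical regimes for $B_n$ reduce to known scalar limit theorems and should not be the hard part.
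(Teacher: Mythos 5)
Your proposal is correct and follows essentially the same route as the paper: the same decomposition of $S_n^{H,p,r}$ into a Wiener-fluctuation part (treated by a CLT conditionally on the fractional/stationary Gaussian sequence and then glued to the other part via conditional characteristic functions and dominated convergence) plus $\mu_p$ times a purely fractional sum handled by the scalar Breuer--Major and Dobrushin--Major theorems, with the Wiener part shown negligible under the slower normalizations. The differences are cosmetic only: you verify the conditional CLT through a Lyapunov condition where the paper uses the vanishing of the maximal normalized weight $\la_{i,n}$, and you dispose of the conditional part in the slow-scaling regimes by the exact bound $\ex{A_n^2}=\sigma_{p,r}^2\,n$ rather than the paper's almost-sure argument via $n^{rH-1/2}k_n$.
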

\begin{remark}
For $r=0$ we have the pure Wiener case, so for any $H\in(0,1)$
$$
n^{-1/2} S_n^{H,p,r}\Rightarrow N(0,\mu_{2p}-\mu_p^2), \quad n\to\infty.
$$
Also note that in the case $p=0$, $r=1$ the limit variance in \eqref{roddH<1/2} vanishes. Obviously, in this case
$$
n^{-H} S_n^{H,0,1} = B^H_1,
$$
so it has the standard normal distribution.
\end{remark}
\begin{proof}
We study different cases in the same order as they appear in the formulation.

Assume first that $p$ and $r\ge 2$ are even. The principal idea in this case is to rewrite mixed power variation as 
$$
S^{H,p,r}_n = S'_n + S''_n,
$$
where
\begin{equation*}
\begin{gathered}
S_n' = n^{rH}\sum_{i=0}^{n-1} \left(\Delta_i^n B^H\right)^r\left(n^{p/2}\left(\Delta_i^n W\right)^p - \mu_p\right),
\\
S_n'' = \mu_p\sum_{i=0}^{n-1} \lt(n^{rH}\lt(\Delta_i^n B^H\rt)^r
-\mu_r\rt).
\end{gathered}
\end{equation*}
Then we apply known results concerning asymptotic behaviour of $S_n''$, since it contains only fractional Brownian motion, and consider  $S_n'$ conditionally on the fractional Brownian motion. Further we realize this idea.

For $H\in(0,3/4)$, write
\begin{gather*}
n^{-1/2}S^{H,p,r}_n= A_n' + A_n''
\end{gather*}
with $A_n' = n^{-1/2}S_n', A_n'' = n^{-1/2}S_n''$.
According to \cite{breuermajor}, for $r$ even, $H\in(0,3/4)$,
\begin{equation}
\label{sn2}
n^{-1/2}\sum_{i=0}^{n-1} \lt(n^{rH}\lt(\Delta_i^n B^H\rt)^r-\mu_r\rt)\Rightarrow N(0,\sigma^2_{H,r}),
\end{equation}
as $n\to\infty$.
Consequently, $A_n''\Rightarrow N(0,\sigma^2_{H,r}\mu_p^2)$, $n\to\infty$.
Further,
\begin{gather*}
A_n' = n^{rH-1/2} R_n k_n,
\end{gather*}
where
\begin{equation*} %\label{Rnkn}
\begin{gathered}
R_n=\frac{1}{k_n}\sum_{i=0}^{n-1} \la_{k,n}\left(n^{p/2}\left(\Delta_i^n W\right)^p -\mu_p\right)\\
k_n = \lt((\mu_{2p}-\mu_p^2)\sum_{i=0}^{n-1} \la_{i,n}^{2}\rt)^{1/2},\ \la_{i,n}= \lt(\Delta_i^n B^H\rt)^r.
\end{gathered}
\end{equation*}
Since $B^H$ is uniformly continuous a.s.,  $\max_{1\le k\le n}\la_{k,n}\to 0, n\to\infty$ a.s.
Thus, taking into account independence of $B^H$ and $W$ and an evident fact that $\var R_n = 1$, we get by CLT that the  conditional distribution  of $R_n$ given $B^H$ converges to standard normal distribution as $n\to\infty$ a.s.
Further, from the ergodic theorem
$$
n^{2rH-1}\sum_{i=0}^{n-1}(\Delta_i^n B^H)^{2r}\to \mu_{2r},\ n\to\infty,
$$
a.s., hence
$$
n^{rH-1/2}k_n\to  \lt(\mu_{2r}(\mu_{2p}-\mu_p^2)\rt)^{1/2}, n\to\infty \text{ a.s.}
$$
So by  Slutsky's theorem, the conditional distribution of $A_n'$ given $B^H$ converges to $N(0,\sigma^2_{p,r})$ a.s., that is, for any $t\in\R$
we have
\begin{equation}
\int_\R e^{\imath tx}\pr(A_n'\in dx\mid B^H) \to e^{-t^2\sigma^2_{p,r}/2}\label{cfconv}
\end{equation}
a.s.\ as $n\to \infty$.
Now write
\begin{gather*}
\ex{e^{it(A_n' + A_n'')}}  = \ex{\ex{e^{\imath t(A_n' + A_n'')}\middle|B^H}}=\ex{\int_\R e^{\imath tx}\pr\lt(A_n'\in dx\middle| B^H\rt)e^{itA_n''}},
\end{gather*}
whence
\begin{gather*}
\abs{\ex{e^{\imath t(A_n' + A_n'')}}- e^{-t^2(\sigma^2_{H,r}\mu_p^2+\sigma^2_{p,r})/2}}\le E_1+E_2,
\end{gather*}
where
\begin{gather*}
E_1
%= \abs{\ex{\lt(\int_\R e^{itx}\pr\lt(S_n'\in dx\mid B^H\rt)-e^{-t^2 \sigma^2_{p,r}}\rt) e^{itS_n^2}}}\\
= \abs{\ex{\lt(\int_\R e^{\imath tx}\pr\lt(A_n'\in dx\middle| B^H\rt)-e^{-t^2 \sigma^2_{p,r}/2}\rt) e^{\imath tA_n''}}}\to 0,\ n\to\infty
\end{gather*}
by \eqref{cfconv} and dominated convergence;
\begin{gather*}
E_2 = e^{-t^2 \sigma^2_{p,r}/2}\abs{\ex{ e^{\imath tA_n''}}-e^{-t^2\sigma^2_{H,r}\mu_p^2/2}}\to 0,\ n\to\infty\end{gather*}
by \eqref{sn2}. It follows that $$n^{-1/2}{S_n^{H,p,r}}  \Rightarrow N(0,\sigma^2_{H,r}\mu^2_p+\sigma^2_{p,r}),\ n\to\infty,$$
as required in this case.

In the case where $H=3/4$, we have by \cite{breuermajor}
$$\frac{1}{\sqrt{n\log n}}\sum_{k=0}^{n-1} \lt(n^{rH}\lt(\Delta_k^n B^H\rt)^r-\mu_r\rt)\Rightarrow N(0,\sigma^2_{3/4,r}),
$$
as $n\to\infty$, whence \eqref{rpevenH=3/4} can be deduced using the same reasoning as above.

For $H\in(3/4,1)$, write
\begin{gather*}
n^{1-2H}S^{H,p,r}_n =n^{1-2H}S_n' + n^{1-2H}S_n''
=n^{1-2H+rH}R_n k_n + n^{1-2H}S_n'',
\end{gather*}
where $R_n,k_n,S_n',S_n''$ are defined above. As before, $R_n\Rightarrow N(0,1)$ conditionally given $B^H$ as $n\to\infty$ a.s.  However, this time $n^{1-2H+rH}k_n\to 0$, $n\to\infty$ a.s., since $n^{rH-1/2} k_n$ has a finite limit and $n^{3/2-2H}\to 0$, $n\to\infty$. Therefore, $n^{1-2H}S_n'\to 0$, $n\to\infty$. Further, according to \cite{dobrushinmajor}, see also \cite{giraitissurgailis,taqqu},
$$
n^{1-2H}S_n'' \Rightarrow \zeta_{H,p,r},\ n\to\infty,
$$
where $\zeta_{H,p,r}$ is a ``Rosenblatt'' random variable. 
Thus,  we get \eqref{rpevenH>3/4} using Slutsky's theorem. This finishes the case where $p$ and $r$ are even.

Now assume that $p$ or $r$ is odd.
In this case $S_n^{H,p,r}$ has a form
$$
S^{H,p,r}_n=\sum_{k=0}^{n-1} n^{rH+p/2}\left(\Delta_k^n W\right)^p\left(\Delta_k^n B^H\right)^r.
$$
Write
$$
n^{-1/2}S^{H,p,r}_n = n^{rH-1/2}R_n k_n + Z_n,
$$
where
$R_n$ and $k_n$ are defined above,
$Z_n = n^{rH-1/2}\mu_p\sum_{k=0}^{n-1} \lt(\Delta_k^n B^H\rt)^r$.
As before, given $B^H$, $n^{rH-1/2}R_n k_n\Rightarrow N(0,\sigma_{p,r}^2)$, $n\to\infty$, a.s.

Now if $p$ is odd, we have $Z_n=0$ irrespective of the value of $H$, 
whence \eqref{podd} immediately follows.

Further, assume that $p$ is even and $r\ge 3$ is odd. For $H\in(0,1/2]$,
we have by \cite{breuermajor}
$$
n^{rH-1/2}\sum_{k=0}^{n-1} \lt(\Delta_k^n B^H\rt)^r\Rightarrow N(0,\sigma^2_{H,r}),\ n\to\infty.
$$
Therefore, $Z_n \Rightarrow N(0,\mu_p^2\sigma^2_{H,r})$, $n\to\infty$. Arguing as in deriving of \eqref{rpevenH<3/4}, we get
\eqref{roddH<1/2}. For $H\in(1/2,1)$, it follows from \cite{dobrushinmajor} that
\begin{equation*} %\label{roddH>1/2AN}
n^{(r-1)H}\sum_{k=0}^{n-1} \lt(\Delta_k^n B^H\rt)^r\Rightarrow N(0,\mu^2_{r+1}),\ n\to\infty,
\end{equation*}
so $n^{1/2-H}Z_n \Rightarrow N(0,\mu_p^2\mu^2_{r+1})$, $n\to\infty$
But $$n^{(r-1)H}R_n k_n= n^{1/2-H}n^{rH-1/2}R_n k_n\to 0,\ n\to\infty,$$
whence \eqref{roddH>1/2} follows. 

For $r=1$, $\sigma_{H,r}=0$,

The proof is now complete.
\end{proof}
Further we study the almost sure behavior of the mixed variations; for brevity, the phrase ``almost surely'' will be omitted.
\begin{proposition}\label{prop-asestim}
Let $\eps>0$ be arbitrary.

If $r=0$, then $S_n^{H,p,r} = o(n^{1/2+\eps})$, $n\to\infty$.

If $p$ and $r\ge 2$ are even, then 
\begin{itemize}
\item for $H\in(0,3/4]$ \  $S_n^{H,p,r} = o(n^{1/2+\eps})$, $n\to\infty$.
\item for $H\in(3/4,1)$ \  $S_n^{H,p,r} = o(n^{2H-1+\eps})$, $n\to\infty$.
\end{itemize}

If $p$ is odd, then for any $H\in(0,1)$ \ $S_n^{H,p,r} = o(n^{1/2+\eps})$, $n\to\infty$.

If $p$ is even and $r$ is odd, then
\begin{itemize}
\item for $H\in(0,1/2]$ \  $S_n^{H,p,r} = o(n^{1/2+\eps})$, $n\to\infty$.
\item for $H\in(1/2,1)$ \  $S_n^{H,p,r} = o(n^{H+\eps})$, $n\to\infty$.
\end{itemize}
\end{proposition}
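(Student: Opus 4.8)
The plan is to run the standard ``moments plus Borel--Cantelli'' argument, exploiting the fact that $S_n^{H,p,r}$ is a centred element of the finite sum of Wiener chaoses $\bigoplus_{k=1}^{p+r}\mathcal H_k$ generated by the joint Gaussian family of increments of $W$ and $B^H$ (it is a linear combination of products of a degree-$p$ polynomial in increments of $W$ and a degree-$r$ polynomial in increments of $B^H$, recentred so that $\ex{S_n^{H,p,r}}=0$). By hypercontractivity on a fixed finite sum of Wiener chaoses, for each integer $m\ge1$ there is $C_m=C_m(p,r)$ with
\begin{equation*}
\ex{\bigl\lvert S_n^{H,p,r}\bigr\rvert^{2m}}\le C_m\bigl(\ex{\bigl\lvert S_n^{H,p,r}\bigr\rvert^{2}}\bigr)^{m}.
\end{equation*}
So the first step would be to prove the second-moment bound $\ex{\lvert S_n^{H,p,r}\rvert^{2}}\le C n^{\gamma}$, where $\gamma$ is twice the exponent in the statement ($\gamma=1$ in the cases with rate $o(n^{1/2+\eps})$; $\gamma=2(2H-1)$ for $p,r$ even and $H\in(3/4,1)$; $\gamma=2H$ for $p$ even, $r$ odd and $H\in(1/2,1)$; with $n^{\gamma}$ replaced by $n\log n$ when $H=3/4$). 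Granting this, Markov's inequality gives $\pr(\lvert S_n^{H,p,r}\rvert>n^{\gamma/2+\eps})\le C_m n^{-2m\eps}$ (up to a factor $(\log n)^m$ when $H=3/4$); choosing $m>1/(2\eps)$ makes the bound summable in $n$, and Borel--Cantelli then yields $S_n^{H,p,r}=o(n^{\gamma/2+\eps})$ a.s., which, as $\eps>0$ is arbitrary, is exactly the claimed rate in each case.

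For the second-moment bounds I would reuse the decomposition $S_n^{H,p,r}=S_n'+S_n''$ from the proof of the Theorem, with $S_n''$ a functional of $B^H$ alone (and $S_n''\equiv0$ when $p$ is odd). For $S_n''$ the required estimate is exactly the variance asymptotics underlying the limit theorems of \cite{breuermajor,dobrushinmajor} invoked there, i.e.\ $\ex{(S_n'')^2}=O(n)$ for $p,r$ even with $H\in(0,3/4)$ and for $p$ even, $r$ odd with $H\in(0,1/2]$; $\ex{(S_n'')^2}=O(n\log n)$ for $H=3/4$; $\ex{(S_n'')^2}=O(n^{2(2H-1)})$ for $p,r$ even with $H\in(3/4,1)$; and $\ex{(S_n'')^2}=O(n^{2H})$ for $p$ even, $r$ odd with $H\in(1/2,1)$. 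For $S_n'=n^{rH}\sum_{i=0}^{n-1}(\Delta_i^n B^H)^r\bigl(n^{p/2}(\Delta_i^n W)^p-\mu_p\bigr)$ I would condition on $B^H$: given $B^H$ this is a weighted sum of i.i.d.\ centred variables (functions of the disjoint, hence independent, increments of $W$), so $\var(S_n'\mid B^H)=(\mu_{2p}-\mu_p^2)\,n^{2rH}\sum_{i=0}^{n-1}(\Delta_i^n B^H)^{2r}$, and taking expectations with $\ex{(\Delta_i^n B^H)^{2r}}=\mu_{2r}n^{-2rH}$ gives $\ex{(S_n')^2}=(\mu_{2p}-\mu_p^2)\mu_{2r}\,n$. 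When $p$ is odd the same computation, with $n^{p/2}(\Delta_i^n W)^p$ (already centred since $\mu_p=0$) in place of the bracket, gives $\ex{\lvert S_n^{H,p,r}\rvert^2}=O(n)$ and no $S_n''$; for $r=0$ one has the pure Wiener case, an i.i.d.\ sum of variance $O(n)$. Adding the two contributions, in every regime $\ex{\lvert S_n^{H,p,r}\rvert^2}\le C\max\bigl(n,\ex{(S_n'')^2}\bigr)$ (with $n$ replaced by $n\log n$ for $H=3/4$), which is the $n^{\gamma}$ needed above.

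Two remarks on the execution. In the regimes $H\in(3/4,1)$ ($p,r$ even) and $H\in(1/2,1)$ ($p$ even, $r$ odd) the bound $\ex{(S_n')^2}=O(n)$ shows $S_n'$ is of strictly smaller order than $S_n''$, since $2H-1>1/2$ and $H>1/2$; thus the super-$n^{1/2}$ rates $n^{2H-1+\eps}$ and $n^{H+\eps}$ come entirely from the fractional part. Also, the step from the $L^2$ bound on $S_n'$ to the $L^{2m}$ bound could be done directly, without the global hypercontractivity statement, via Rosenthal's inequality conditionally on $B^H$ combined with the elementary estimate $\ex{\bigl(\sum_{i=0}^{n-1}(\Delta_i^n B^H)^{2r}\bigr)^m}\le\mu_{2rm}\,n^{m(1-2rH)}$ (expand the power, apply H\"older to each summand). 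I expect the main obstacle to be not any single step but the bookkeeping: correctly identifying $\gamma$ and which of $S_n'$, $S_n''$ dominates in each of the many cases, and making sure the $S_n''$-variance asymptotics in the regimes $H\ge3/4$ (resp.\ $H>1/2$ for odd $r$) are genuinely available from the quoted references rather than merely suggested by the weak limits.
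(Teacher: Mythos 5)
Your proposal is correct and follows essentially the same route as the paper: represent the (normalized) statistic as an element of a fixed finite sum of Wiener chaoses, use the equivalence of moments there (the paper cites Janson, Corollary 7.36) together with the second-moment asymptotics already underlying the weak limit theorem, and conclude by a summability/Borel--Cantelli argument with a suitably large moment exponent. The only difference is cosmetic: the paper sums the series $\sum_n Q_n^{2l}/n^2$ instead of invoking Markov plus Borel--Cantelli explicitly, and it asserts the $L^2$ bounds briefly where you derive them via the $S_n'+S_n''$ decomposition.
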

\begin{proof}
We assume that $p$ and $r$ are even, $H\in (0,3/4)$, in other cases the argument is similar. 
Abbreviate $Q_n = n^{-1/2} S_n^{H,p,r}$. We need to show that $Q_n = o(n^\eps)$, $n\to\infty$. It is easy to check that $\ex{Q_n^2}\to \sigma^2_{H,r}\mu^2_p + \sigma_{p,r}^2$, $n\to\infty$. It follows that $\sup_{n\ge 1} \ex{Q_n^2} <\infty$. 

Clearly, $Q_n$ can be represented as a combination of multiple stochastic integrals with respect to some fixed Gaussian measure of order between $1$ and $p+r$. Then we can use the following well-known fact (see e.g.\ \cite[Corollary 7.36]{janson}): for any integer $p\ge l$, there exists a constant $C_l$ such that for all $n\ge 1$ \ $\ex{Q_n^{2l}}\le C_{l} \left(\ex{Q_n^2} \right)^l$.

Now take any integer $l \ge {\eps}^{-1}$ and write 
\begin{gather*}
\ex{\sum_{n= 1}^\infty \frac{Q_n^{2l}}{n^{2}}} = \sum_{n= 1}^\infty\ex{ \frac{Q_n^{l}}{n^{2}}}\le C_l\sum_{n= 1}^\infty \frac{\left(\ex{Q_n^2} \right)^l}{n^2}\\ \le C_l\left(\sup_{n\ge 1}\ex{Q_n^2} \right)^l\sum_{n= 1}^\infty \frac{1}{n^2}<\infty.
\end{gather*}
Therefore, the series $\sum_{n= 1}^\infty {Q_n^{2l}}/{n^{2}}$ converges almost surely; in particular, $Q_n = o(n^{1/l})$, $n\to\infty$, whence the statement follows.
\end{proof}
\section{Statistical estimation in mixed model}\label{statsec}

Now we turn to the question of parametric estimation in the mixed model
\begin{equation}\label{mixed model}
M_t^H =  a B_t^H + b W_t,\ t\in[0,T],
\end{equation}
where $a$, $b$ are non-zero numbers, which we assume to be positive, without loss of generality. Our primary goal is to construct a strongly consistent estimator for the Hurst parameter $H$, given a single observation of $M^H$.

It is well-known (see \cite{cheredito}) that for $H\in(3/4,1)$ the measure induced by $M^H$ in $C[0,T$] is equivalent to that of $b W$. Therefore, the property of almost sure convergence in this case is independent of $H$. Consequently, no strongly  consistent estimator for $H\in(3/4,1)$ based on a single observation of $M^H$ exists.
%
%
%For $H<1/2$ one can use standard quadratic variations to identify $H$, see below. So we will focus mainly on  $H\in(1/2,3/4)$, making necessary comments for other values of $H$.

In this section we denote  $\Delta_i^n X = X_{T(i+1)/n}-X_{Ti/n}$
and
$$
V^{H,p,r}_n = \sum_{i=0}^{n-1}  \lt(\Delta_i^n W\rt)^p \lt(\Delta_i^n B^H\rt)^r.
$$

\subsection{Statistical estimation based on quadratic variation}

Consider the quadratic variation of $M^H$, i.e.
\begin{gather*}
V^{H,2}_n:= \sum_{i=0}^{n-1}  \lt(\Delta_i^n M^H\rt)^2 = a^2 V^{H,0,2}_n + 2ab V^{H,1,1}_n + b^2 V^{H,2,0}_n.
\end{gather*}
Note that $V^{H,2}_n$ depends only on the observed process, so the notation

By the ergodic theorem, we have that $V^{H,0,2}_n\sim T^{2H} n^{1-2H}$, $V^{H,2,0}_n\to T$, $V^{H,1,1}_n = o(n^{1/2-H})$, $n\to\infty$. Therefore, the asymptotic behavior of $V_n^{H,2}$ depends on  whether $H<1/2$ or not. Precisely, for $H\in(0,1/2)$,
\begin{equation}\label{v2nh<1/2}
V^{H,2}_n\sim a^2 T^{2H} n^{1-2H}, n\to\infty,
\end{equation}
so the quadratic variation behaves similarly to that of a scaled fBm. 

For $H\in(1/2,1)$, 
\begin{equation}\label{v2nh>1/2}
V^{H,2}_n \to b^2 T, n\to\infty,
\end{equation}
so  the quadratic variation behaves similarly to that of a scaled Wiener process. 

Let us consider the cases $H<1/2$ and $H>1/2$ individually in more detail.

\subsubsection{$H\in(0,1/2)$}
%\paragraph{Consistent estimates for $H$}
We have seen above that this case is similar to the pure fBm case. Unsurprisingly, the same estimators work, which is precisely stated below.
\begin{theorem}\label{thm-h<1/2cons}
For $H\in(0,1/2)$, the following statistics
$$
\widehat{H}_{k} = \frac12\lt(1 - \frac1k \log_2 V_{2^k}^{H,2}\rt)
$$
and
$$
\widetilde{H}_{k} = \frac12\lt(\log_2 \frac{V_{2^k}^{H,2}}{V_{2^{k+1}}^{H,2}} + 1\rt)
$$
are strongly consistent estimators of the Hurst parameter $H$.
\end{theorem}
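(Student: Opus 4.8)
The plan is to obtain the claim from the almost sure growth of the quadratic variation $V_n^{H,2}=a^2V_n^{H,0,2}+2abV_n^{H,1,1}+b^2V_n^{H,2,0}$, which I would reduce, via self-similarity, to the almost sure estimates of Proposition~\ref{prop-asestim}. Write $\delta_i^n X=X_{(i+1)/n}-X_{i/n}$ for an increment on the partition of $[0,1]$, and set $\widetilde B^H_t=T^{-H}B^H_{Tt}$, $\widetilde W_t=T^{-1/2}W_{Tt}$. These again form an independent pair consisting of a standard fBm and a standard Wiener process, so they have the same law as $(W,B^H)$ and Proposition~\ref{prop-asestim} applies verbatim to the centered sums $S_n^{H,p,r}$ formed from $(\widetilde W,\widetilde B^H)$. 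Moreover $\Delta_i^n B^H=T^H\delta_i^n\widetilde B^H$ and $\Delta_i^n W=T^{1/2}\delta_i^n\widetilde W$, whence, pathwise,
\begin{gather*}
V_n^{H,0,2}=T^{2H}n^{1-2H}\bigl(1+n^{-1}S_n^{H,0,2}\bigr),\qquad
V_n^{H,2,0}=T\bigl(1+n^{-1}S_n^{H,2,0}\bigr),\\
V_n^{H,1,1}=T^{H+1/2}n^{-H-1/2}S_n^{H,1,1},
\end{gather*}
where I used $\mu_0=\mu_2=1$ and $\mu_1=0$.

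Next I would fix $\eps\in(0,1/2)$ and invoke Proposition~\ref{prop-asestim}: since $H<1/2<3/4$ and $p=0,r=2$ are even, $S_n^{H,0,2}=o(n^{1/2+\eps})$; since $r=0$, $S_n^{H,2,0}=o(n^{1/2+\eps})$; and since $p=1$ is odd, $S_n^{H,1,1}=o(n^{1/2+\eps})$. Substituting into the three displays, almost surely $V_n^{H,0,2}=T^{2H}n^{1-2H}(1+o(1))$, $V_n^{H,2,0}=T+o(1)$, and $V_n^{H,1,1}=o(n^{\eps-H})$. Comparing the exponents $1-2H$, $\eps-H$ and $0$ — and using $1-2H>0$ together with $1-2H>\eps-H$ (valid because $\eps<1-H$) — the fractional term dominates, so almost surely $V_n^{H,2}\sim a^2T^{2H}n^{1-2H}$ as $n\to\infty$.

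Finally I would take base-$2$ logarithms and put $n=2^k$: almost surely $\log_2 V_{2^k}^{H,2}=(1-2H)k+\log_2(a^2T^{2H})+o(1)$ as $k\to\infty$. Dividing by $k$ gives $k^{-1}\log_2 V_{2^k}^{H,2}\to 1-2H$, hence $\widehat H_k\to\tfrac12\bigl(1-(1-2H)\bigr)=H$ almost surely; subtracting the expansion at $k+1$ from that at $k$ gives $\log_2\bigl(V_{2^k}^{H,2}/V_{2^{k+1}}^{H,2}\bigr)\to-(1-2H)$, hence $\widetilde H_k\to\tfrac12\bigl(-(1-2H)+1\bigr)=H$ almost surely. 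This is exactly the asserted strong consistency.

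I do not anticipate a genuine obstacle once Proposition~\ref{prop-asestim} is available; the argument is essentially bookkeeping of rates. The one step needing care is the exponent comparison in the expansion of $V_n^{H,2}$: one must take $\eps$ small enough that the term $a^2T^{2H}n^{1-2H}$ strictly dominates both the Wiener contribution (order $1$) and the cross term (order $n^{\eps-H}$). This is precisely where the hypothesis $H<1/2$ enters; for $H>1/2$ one has instead $V_n^{H,2}\to b^2T$ by \eqref{v2nh>1/2}, so these particular quadratic-variation statistics would carry no information on $H$ and a different construction would be required.
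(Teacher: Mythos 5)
Your argument is correct and follows essentially the same route as the paper: both reduce to the almost sure rates of Proposition~\ref{prop-asestim} to conclude $V_{2^k}^{H,2}\sim a^2T^{2H}2^{k(1-2H)}$ almost surely and then take base-$2$ logarithms. The only differences are cosmetic — you make the self-similarity rescaling from $[0,T]$ to $[0,1]$ explicit, while the paper keeps slightly sharper error terms (its expansion \eqref{log2vnh2}), which it needs later only for the remark comparing $\widehat H_k$ and $\widetilde H_k$, not for consistency itself.
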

\begin{proof}
Write 
\begin{equation*}
\begin{gathered}
\log_2 V^{H,2}_{2^k} =  \log_2 \left(a^2 T^{2H} 2^{k(1-2H)}\right)+ \log_2 \left(1+ \frac{b^2}{a^2}T^{1-2H}2^{-k(1-2H)} + \zeta_k\right),
\end{gathered}
\end{equation*}
where 
\begin{equation*}
\zeta_k = \frac{a^2 \left(V_{2^k}^{H,0,2}-T^{2H}2^{k(1-2H)}\right)+ b^2\left(V_{2^k}^{H,2,0}-T\right)+ 2a b V_{2^k}^{H,1,1}} {a^2 T^{2H} 2^{k(1-2H)}}.
\end{equation*}
From Proposition~\ref{prop-asestim} it follows that for any $\eps>0$ \ $\zeta_k =  o(2^{k(-1/2+\eps)}) + o(2^{k(2H-3/2+\eps)})+ o(2^{k(H-1+\eps)})=o(2^{k(-1/2+\eps)})$, $k\to\infty$. 
Hence we have
\begin{equation}\label{log2vnh2}
\begin{gathered}
\log_2 V^{H,2}_{2^k} = 2 \log_2 a + 2H \log_2 T + (1-2H) k
\\ + O(2^{k(2H-1)})+ o(2^{k(-1/2+\eps)}), k\to\infty.
\end{gathered}
\end{equation}
In particular, 
$$
\log_2 V^{H,2}_{2^k} \sim 2 \log_2 a + 2H \log_2 T + (1-2H) k, k\to\infty,
$$
whence
 the result immediately follows.
\end{proof}
\begin{remark}
At the first sight, there is no clear advantage of $\widehat{H}_{k}$ or $\widetilde{H}_{k}$. But a careful analysis shows that $\widetilde{H}_{k}$ is better. Indeed, from \eqref{log2vnh2} it is easy to see that
\begin{equation}\label{hhatasym}
\widehat{H}_k = H - \frac{\log_2 a + H \log_2 T}{k} + o(k^{-1}),k\to\infty,
\end{equation}
while
\begin{equation} \label{htildeasym}
\widetilde{H}_k = H  + O(2^{k(2H-1)}) + o(2^{k(-1/2+\eps)}), k\to\infty.
\end{equation}
Now it is absolutely clear that $\widetilde{H}_k$ performs much better
(unless one hits the jackpot by having $a T^H=1$).
\end{remark}

Now we turn to the question of asymptotic normality of the estimators. Note that in the purely fractional case, the estimator $\widetilde{H}_k$ is asymptotically normal for all $H\in(0,1)$. In the mixed case, the analogy ends at $H=1/4$. 
\begin{proposition}\label{prop-tildeHasympnorm}
For $H\in(0,1/4)$,
$$
2^{k/2}\left( \widetilde{H}_k-H\right)\Rightarrow N(0,(\sigma'_{H})^2),\quad k\to\infty,
$$
where
\begin{gather*}
\sigma_H' = \frac{1}{2\log 2}\lt(\rho'_{H,0}+2\sum_{m=1}^\infty \rho'_{H,m}\rt)^{1/2},\\
\rho'_{H,m} = \mathsf{E}\biggl[\lt(\lt(B_1^H\rt)^2 - 2^{2H-1}\lt(B_{1/2}^H\rt)^2 - 2^{2H-1}\lt(B_1^H-B_{1/2}^H\rt)^2\rt)\\
\times \lt(\lt(B_{m+1}^H-B_{m}^H\rt)^2 - 2^{2H-1}\lt(B_{m+1/2}^H-B_{m}^H\rt)^2 - 2^{2H-1}\lt(B_{m+1}^H-B_{m+1/2}^H\rt)^2\rt)\biggr].
\end{gather*}
\end{proposition}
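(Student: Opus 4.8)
The plan is to reduce the problem, by self-similarity, to a central limit theorem for a partial sum of a single fixed stationary sequence, and then to undo the logarithm. First I would rewrite the estimator exactly. Put $U_k:=V_{2^k}^{H,2}/(a^2T^{2H}2^{k(1-2H)})$. As in the proof of Theorem~\ref{thm-h<1/2cons}, $U_k=1+\tfrac{b^2}{a^2}T^{1-2H}2^{-k(1-2H)}+\zeta_k$ with $\zeta_k$ as there, and since $\log_2 V_{2^k}^{H,2}-\log_2 V_{2^{k+1}}^{H,2}=-(1-2H)+\log_2(U_k/U_{k+1})$, a short computation yields the exact identity
\[ \widetilde H_k-H=\frac{1}{2\log 2}\bigl(\ln U_k-\ln U_{k+1}\bigr). \]

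Next I would extract the Gaussian part of $\ln U_k$. Let $\eta_k:=\bigl(V_{2^k}^{H,0,2}-T^{2H}2^{k(1-2H)}\bigr)/\bigl(T^{2H}2^{k(1-2H)}\bigr)$. From $\ln(1+x)=x+O(x^2)$, together with $V_n^{H,1,1}=O_P(n^{-H+\eps})$ for every $\eps>0$ (self-similarity and Proposition~\ref{prop-asestim}), $V_n^{H,2,0}-T=O_P(n^{-1/2})$, and $\eta_k=O_P(2^{-k/2})$ (the Breuer--Major theorem \cite{breuermajor} for the quadratic variation of $B^H$, $H<3/4$), one checks that $\ln U_k=\eta_k+\varrho_k$ with $2^{k/2}\varrho_k\to0$ in probability. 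Here the deterministic summand $\tfrac{b^2}{a^2}T^{1-2H}2^{-k(1-2H)}$ contributes $2^{k(2H-1/2)}$ after multiplication by $2^{k/2}$, and this is the single place where the hypothesis $H<1/4$ enters. Hence
\[ 2^{k/2}\bigl(\widetilde H_k-H\bigr)=\frac{1}{2\log 2}\,2^{k/2}\bigl(\eta_k-\eta_{k+1}\bigr)+o_P(1). \]

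The core step is the weak convergence of $2^{k/2}(\eta_k-\eta_{k+1})$. Writing $n=2^k$ and grouping the increments of $B^H$ at resolution $2n$ in consecutive pairs,
\[ \eta_k-\eta_{k+1}=\frac{1}{T^{2H}n^{1-2H}}\bigl(V_n^{H,0,2}-2^{2H-1}V_{2n}^{H,0,2}\bigr)=\frac{1}{T^{2H}n^{1-2H}}\sum_{i=0}^{n-1}G_i^{(n)}, \]
where $G_i^{(n)}=(\Delta_{2i}^{2n}B^H+\Delta_{2i+1}^{2n}B^H)^2-2^{2H-1}(\Delta_{2i}^{2n}B^H)^2-2^{2H-1}(\Delta_{2i+1}^{2n}B^H)^2$. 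By self-similarity and stationarity of the increments of $B^H$, $(G_i^{(n)})_{i=0}^{n-1}\stackrel{d}{=}((T/n)^{2H}g_i)_{i=0}^{n-1}$, where
\[ g_i=(B_{i+1}^H-B_i^H)^2-2^{2H-1}(B_{i+1/2}^H-B_i^H)^2-2^{2H-1}(B_{i+1}^H-B_{i+1/2}^H)^2 \]
is a \emph{fixed}, centered, stationary sequence; thus $T$ cancels and $2^{k/2}(\eta_k-\eta_{k+1})\stackrel{d}{=}n^{-1/2}\sum_{i=0}^{n-1}g_i$. In terms of the standard stationary Gaussian sequence $\theta_j:=2^H(B_{(j+1)/2}^H-B_{j/2}^H)$, which has covariance $\rho_H$, one computes $g_i=(2^{-2H}-\tfrac12)(\theta_{2i}^2+\theta_{2i+1}^2-2)+2^{1-2H}(\theta_{2i}\theta_{2i+1}-\rho_H(1))$, a functional of Hermite rank $2$. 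Since $H<3/4$, $\sum_{m=-\infty}^{\infty}\rho_H(m)^2<\infty$, so the central limit theorem for nonlinear functionals of stationary Gaussian sequences (\cite{breuermajor}, or its vector version \cite{arcones}) gives $n^{-1/2}\sum_{i=0}^{n-1}g_i\Rightarrow N(0,s^2)$ with $s^2=\sum_{m=-\infty}^{\infty}\ex{g_0g_m}=\rho'_{H,0}+2\sum_{m=1}^{\infty}\rho'_{H,m}$, the last equality by stationarity of $\{g_i\}$. Slutsky's theorem then yields $2^{k/2}(\widetilde H_k-H)\Rightarrow N(0,s^2/(2\log 2)^2)=N(0,(\sigma'_H)^2)$.

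The hard part is the bookkeeping behind $\ln U_k=\eta_k+\varrho_k$: one must show that the deterministic bias, the mixed variation $V_n^{H,1,1}$, the Wiener term $V_n^{H,2,0}-T$, and the quadratic Taylor remainder are each $o_P(2^{-k/2})$, which requires the sharp rates from Section~\ref{mixedvarsec} and not the coarser $o(n^{\eps})$ estimates of Proposition~\ref{prop-asestim}; indeed, the rougher bound $V_n^{H,1,1}=o(n^{1/2-H})$ used earlier in the section would be too weak, and it is precisely this comparison of rates that produces the threshold $H=1/4$. A secondary technicality is that, although the rest of the paper relies on one-dimensional limit theorems, $g_i$ genuinely depends on the two-dimensional Gaussian vector $(\theta_{2i},\theta_{2i+1})$; one either invokes a vector-valued Breuer--Major theorem or splits $g_i$ into a $\sum_j(\theta_j^2-1)$ part and a $\sum_i(\theta_{2i}\theta_{2i+1}-\rho_H(1))$ part and establishes their joint asymptotic normality, the summability $\sum_{m=-\infty}^{\infty}\rho_H(m)^2<\infty$ (valid since $H<3/4$) making both routes work.
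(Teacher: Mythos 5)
Your proposal is correct and follows essentially the same route as the paper's proof: isolate the purely fractional difference $V_{2^k}^{H,0,2}-2^{2H-1}V_{2^{k+1}}^{H,0,2}$ (your $\eta_k-\eta_{k+1}$ equals the paper's $R_k^{H,0,2}$ up to the same deterministic normalization), show the mixed and Wiener contributions are $o_P(2^{-k/2})$ using the rates of Proposition~\ref{prop-asestim} (with $H<1/4$ entering through the $2^{k(2H-1/2)}$ bias term), reduce by self-similarity to a fixed stationary Hermite-rank-2 sequence, and conclude via the Breuer--Major CLT and Slutsky's theorem. The only differences are presentational (linearizing $\ln U_k-\ln U_{k+1}$ instead of $\log_2(1+\zeta_k)$, and your explicit remark on the vector-valued versus scalar form of the Breuer--Major theorem, a point the paper passes over silently).
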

\begin{proof}
Write 
\begin{gather*}
\widetilde{H}_k-H = \frac12\lt(\log_2 \frac{V_{2^k}^{H,2}}{V_{2^{k+1}}^{H,2}}-(2H-1)\rt) = 
\frac1{2}\log_2 \frac{V_{2^k}^{H,2}}{2^{2H-1}V_{2^{k+1}}^{H,2}} \\
=  \frac1{2}\log_2 \left(\frac{V_{2^k}^{H,2}-2^{2H-1}V_{2^{k+1}}^{H,2}}{2^{2H-1}V_{2^{k+1}}^{H,2}}+1\right).
\end{gather*}
Since by \eqref{v2nh<1/2}
$$
\zeta_k:= \frac{V_{2^k}^{H,2}-2^{2H-1}V_{2^{k+1}}^{H,2}}{2^{2H-1}V_{2^{k+1}}^{H,2}}\to 0,\quad k\to\infty,
$$
we obtain
$$
\widetilde{H}_k-H =  
\zeta_k  \left(\frac 1{2\log 2}+o(1)\right),\quad k\to\infty. 
$$
Now write 
\begin{gather*}
V_{2^k}^{H,2}-2^{2H-1}V_{2^{k+1}}^{H,2}
= a^2 R_k^{H,0,2} + 2ab R_k^{H,1,1} + b^2 R_k^{H,2,0},
\end{gather*}
where
\begin{gather*}
R_k^{H,i,j} = V_{2^k}^{H,i,j} - 2^{2H-1}V_{2^{k+1}}^{H,i,j}, i,j\in\set{0,1,2}.
\end{gather*}
By Proposition~\ref{prop-asestim} we have for any $\eps\in(0,H)$ \
$V_{n}^{H,1,1} = o(n^{-H+\eps})$, $n\to\infty$, whence $R_k^{H,1,1} = o(2^{k(-H+\eps)}) = o(1)$, $k\to\infty$. Therefore, 
\begin{gather*}
\frac{2ab R_k^{H,1,1}}{2^{2H-1}V^{H,2}_{2^{k+1}}}\sim  \frac{2b R_k^{H,1,1}}{a^2 T^{2H} 2^{k(1-2H)}} = o(2^{k(2H-1)}),\quad k\to\infty.
\end{gather*}
By the ergodic theorem, $V_n^{H,2,0} \to  T$, $n\to\infty$, so 
\begin{gather*}
\frac{b^2 R_k^{H,2,0}}{2^{2H-1}V^{H,2}_{2^{k+1}}}\sim  \frac{b^2 R_k^{H,2,0}}{a^2 T^{2H} 2^{k(1-2H)}} = O(2^{k(2H-1)}),\quad k\to\infty.
\end{gather*}
Thus,  we get
\begin{equation}\label{tildeH-H}
\begin{gathered}
2^{k/2}\left( \widetilde{H}_k-H\right) =\left(\frac{a^2 2^{k/2} R_k^{H,0,2}}{2^{2H-1}V_{2^{k+1}}^{H,2}} + O(2^{k(2H-1/2)}) \right) \left(\frac{1}{2\log 2  }+o(1)\right)\\
= \frac{2^{k(2H-1/2)} R_k^{H,0,2}}{2 T^{2H}\log 2 }  + o(1),\quad k\to\infty.
\end{gathered}
\end{equation}Now write
$$
R_k^{H,0,2} = \sum_{m=0}^{2^{k}-1} \left(\left(\Delta_{2^k}^m B^H\right)^2 - 2^{2H-1}\left(\Delta_{2^{k+1}}^{2m} B^H\right)^2 - 2^{2H-1}\left(\Delta_{2^{k+1}}^{2m+1} B^H\right)^2\right).
$$
In view of the self-similarity of $B^H$, 
$$
R_k^{H,0,2} \overset{d}{=} 2^{-2Hk}T^{2H}\sum_{m=0}^{2^{k}-1} \eta_k,
$$
where 
$$
\eta_k = \lt(B_{k+1}^H-B_{k}^H\rt)^2 - 2^{2H-1}\lt(B_{k+1/2}^H-B_{k}^H\rt)^2 - 2^{2H-1}\lt(B_{k+1}^H-B_{k+1/2}^H\rt)^2.
$$
So we can apply CLT for stationary Gaussian sequence (see \cite{breuermajor})  and deduce that
\begin{gather*}
\frac{2^{k(2H-1/2)} R_k^{H,0,2}}{ T^{2H} } \overset{d}{=} 2^{-k/2}\sum_{m=0}^{2^k-1}\eta_k  \Rightarrow N(0,\sigma^2),\quad k\to\infty,
\end{gather*}
where 
$$
\sigma^2 = \ex{\eta_0^2} + 2\sum_{m=0}^\infty \ex{\eta_0\eta_m} = 
\rho'_{H,0} + 2\sum_{m=0}^\infty \rho'_{H,m}.
$$
Using this convergence and \eqref{tildeH-H}, we get the required statement with the help of  Slutsky's theorem.
\end{proof}

Now let $H\in(1/4,1/2)$. (We omit $H=1/4$ for two reasons: first, it is hard to distinguish this case statistically from $H\neq 1/4$; second, in this case it is shown exactly as in Proposition~\ref{prop-tildeHasympnorm} that $2^{k/2}(\widetilde{H}_k-H)$ converges to a non-central limit law.) 
In this case neither  $\widehat{H}_k$ nor $\widetilde{H}_k$ is asymptotically normal. In fact, a careful analysis of the proof of Proposition~\ref{prop-tildeHasympnorm} shows that $2^{k(1-2H)}(\widetilde{H}_k-H)$ converges to some constant. Nevertheless, it is possible to construct an asymptotically  normal estimator by cancelling this constant out. To this end, one has to consider 
$$
U_k^{H,2}= V_{2^{k}}^{H,2} - V_{2^{k+1}}^{H,2}
$$
instead of $V_{2^k}^{H,2}$. For well-definiteness we introduce the notation
$$
\log_{2+} x = \begin{cases}
\log_2 x,\ &x>0,\\
0, &x\le 0.
\end{cases}
$$

\begin{theorem}\label{prop-tildehk2h<1/2}
For $H\in(0,1/2)$, the statistic 
$$
\widetilde{H}_k^{(2)} = \frac12\lt(\log_{2+}\frac{U_{k}^{H,2}}{U_{k+1}^{H,2}}+1\rt)
$$
is a strongly consistent estimator of $H$, moreover, for any $\eps>0$, 
\begin{equation}\label{tildehk2}
\widetilde{H}_k^{(2)} = H + o(2^{k(-1/2+\eps)}),\ k\to\infty.
\end{equation}
The estimator $\widetilde{H}_k^{(2)}$ is asymptotically normal:
$$
2^{k/2}\left(\widetilde{H}_k^{(2)}-H\right)\Rightarrow N(0,(\sigma''_H)^2),\quad k\to\infty,
$$
with
\begin{gather*}
\sigma_H'' = \frac{1}{(2^{2-2H}-2)\log 2}\lt(\rho''_{H,0}+2\sum_{m=1}^\infty \rho''_{H,m}\rt)^{1/2},\\
\rho''_{H,m} = \mathsf{E}\biggl[\Big( s_{0,1}  - 
(c_H+1) \big(s_{0}^{1/2} + c_H(s_{1/2}^{1/2}\big) + c_H\big(s_{0}^{1/4} + s_{1/4}^{1/4} + s_{1/2}^{1/4} + s_{3/4}^{1/4}\big)\Big)\\
\times \Big( s_{m}^{1}  - 
(c_H+1) \big(s_{m}^{1/2} + s_{m+1/2}^{1/2}\big) + c_H \big(s_{m}^{1/4} + s_{m+1/4}^{1/4} + s_{m+1/2}^{1/4} + s_{m+3/4}^{1/4}\big)
\Big)\biggr];
\end{gather*}
here $s_{t}^h = \left(B_{t+h}^H- B_{t}^H\right)^2$, $c_H = 2^{2H-1}$.
\end{theorem}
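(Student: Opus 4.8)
The plan is to run the argument of Proposition~\ref{prop-tildeHasympnorm}, with the first difference $V_{2^k}^{H,2}-2^{2H-1}V_{2^{k+1}}^{H,2}$ replaced by the corresponding \emph{second} difference; that is exactly what removes the $O(2^{k(2H-1)})$ bias. Decompose $U_k^{H,2}=a^2U_k^{H,0,2}+2ab\,U_k^{H,1,1}+b^2U_k^{H,2,0}$ with $U_k^{H,i,j}=V_{2^k}^{H,i,j}-V_{2^{k+1}}^{H,i,j}$. First I would record, from the ergodic theorem together with Proposition~\ref{prop-asestim} and self-similarity, the estimates $V_n^{H,0,2}=T^{2H}n^{1-2H}(1+o(n^{-1/2+\eps}))$, $V_n^{H,2,0}=T+o(n^{-1/2+\eps})$ and $V_n^{H,1,1}=o(n^{-H+\eps})$, $n\to\infty$. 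Substituting, the constant $b^2T$ in $U_k^{H,2,0}$ cancels, leaving $o(2^{k(-1/2+\eps)})$; $U_k^{H,1,1}=o(2^{k(-H+\eps)})$; and $U_k^{H,0,2}$ retains its leading term $T^{2H}(1-2^{1-2H})2^{k(1-2H)}$. Since $1-2H>0$, for $H\in(0,1/2)$ the fractional term dominates, giving
\[
U_k^{H,2}=a^2T^{2H}\bigl(1-2^{1-2H}\bigr)2^{k(1-2H)}\bigl(1+o(2^{k(-1/2+\eps)})\bigr),\qquad k\to\infty.
\]
Hence $U_k^{H,2}/U_{k+1}^{H,2}\to 2^{2H-1}>0$, so $\log_{2+}$ may be replaced by $\log_2$ for large $k$, and $\log_{2+}(U_k^{H,2}/U_{k+1}^{H,2})=(2H-1)+o(2^{k(-1/2+\eps)})$; this yields strong consistency and \eqref{tildehk2}.

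For asymptotic normality write $\widetilde H_k^{(2)}-H=\tfrac12\log_{2+}\dfrac{U_k^{H,2}}{2^{2H-1}U_{k+1}^{H,2}}$ and put $\theta_k=\dfrac{U_k^{H,2}-2^{2H-1}U_{k+1}^{H,2}}{2^{2H-1}U_{k+1}^{H,2}}$; by the previous step $\theta_k\to0$, so $\widetilde H_k^{(2)}-H=\dfrac{\theta_k}{2\log2}(1+o(1))$, and it suffices to prove $2^{k/2}\theta_k\Rightarrow N\bigl(0,\sigma^2/(1-2^{1-2H})^2\bigr)$ with $\sigma^2=\rho_{H,0}''+2\sum_{m\ge1}\rho_{H,m}''$. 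The numerator equals $a^2D_k^{H,0,2}+2ab\,D_k^{H,1,1}+b^2D_k^{H,2,0}$ where $D_k^{H,i,j}=V_{2^k}^{H,i,j}-(1+2^{2H-1})V_{2^{k+1}}^{H,i,j}+2^{2H-1}V_{2^{k+2}}^{H,i,j}$. The deterministic parts of $D_k^{H,2,0}$ and of $D_k^{H,0,2}$ both vanish (for the second, the coefficient $1-(1+2^{2H-1})2^{1-2H}+2^{2H-1}2^{2(1-2H)}$ equals $0$), so $D_k^{H,2,0}=o(2^{k(-1/2+\eps)})$ and $D_k^{H,1,1}=o(2^{k(-H+\eps)})$. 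Dividing by the denominator $\sim a^2T^{2H}(1-2^{1-2H})2^{k(1-2H)}$ and multiplying by $2^{k/2}$, the Wiener and mixed contributions become $o(2^{k(2H-1+\eps)})$ and $o(2^{k(H-1/2+\eps)})$, hence vanish for $H<1/2$.

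It then remains to find the limit of $2^{k(2H-1/2)}D_k^{H,0,2}$. Grouping the squared increments by the coarsest dyadic subinterval and invoking stationarity of increments and self-similarity of $B^H$,
\[
D_k^{H,0,2}\overset{d}{=}T^{2H}2^{-2Hk}\sum_{m=0}^{2^k-1}\xi_m,\qquad \xi_m=s_m^{1}-(c_H+1)\bigl(s_m^{1/2}+s_{m+1/2}^{1/2}\bigr)+c_H\bigl(s_m^{1/4}+s_{m+1/4}^{1/4}+s_{m+1/2}^{1/4}+s_{m+3/4}^{1/4}\bigr),
\]
which is a centered stationary sequence with $\ex{\xi_0\xi_m}=\rho_{H,m}''$. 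Each $\xi_m$ is a quadratic functional of the Gaussian sequence of increments of $B^H$ at scale $1/4$; since $H<1/2<3/4$ one has $\sum_m|\rho_{H,m}''|<\infty$, and the central limit theorem for functionals of stationary Gaussian sequences (\cite{breuermajor}, applied exactly as in the proof of Proposition~\ref{prop-tildeHasympnorm}) gives $2^{-k/2}\sum_{m=0}^{2^k-1}\xi_m\Rightarrow N(0,\sigma^2)$. Therefore $2^{k(2H-1/2)}D_k^{H,0,2}\overset{d}{=}T^{2H}2^{-k/2}\sum_m\xi_m\Rightarrow N(0,T^{4H}\sigma^2)$, and combining this with the estimates above and Slutsky's theorem yields $2^{k/2}\theta_k\Rightarrow N\bigl(0,\sigma^2/(1-2^{1-2H})^2\bigr)$; dividing by $2\log2$ and using $(2^{2-2H}-2)^2=4(1-2^{1-2H})^2$ produces $2^{k/2}(\widetilde H_k^{(2)}-H)\Rightarrow N(0,(\sigma_H'')^2)$. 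The main obstacle is the order bookkeeping: one must verify that for every $H\in(0,1/2)$ — including $H$ near $1/2$, where $2H-1\uparrow0$ — the Wiener and mixed terms are genuinely smaller than $2^{-k/2}$ both in the almost sure estimate and at the CLT scale, and that the twofold cancellation of deterministic parts in $D_k^{H,0,2}$ is exact.
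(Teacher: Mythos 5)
Your proposal is correct and follows essentially the same route as the paper: the second-difference decomposition $U_k^{H,2}-2^{2H-1}U_{k+1}^{H,2}=a^2P_k^{H,0,2}+2abP_k^{H,1,1}+b^2P_k^{H,2,0}$ (your $D_k^{H,i,j}$), almost-sure negligibility of the Wiener and mixed parts via Proposition~\ref{prop-asestim}, and a Breuer--Major CLT for the self-similar representation of the fractional part, exactly as in Proposition~\ref{prop-tildeHasympnorm}. You merely make explicit some bookkeeping the paper leaves implicit (the exact cancellation of the deterministic parts and the variance normalization $(2^{2-2H}-2)^2=4(1-2^{1-2H})^2$), which is consistent with the published argument.
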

\begin{proof}
The proof is similar to that of Proposition~\ref{prop-tildeHasympnorm}, so we will omit some details. Using the same transformations as there, we get
\begin{gather*}
\widetilde{H}^{(2)}_k-H = \frac{U_{k}^{H,2}-2^{2H-1}U_{k+1}^{H,2}}{2^{2H-1}U_{k+1}^{H,2}}  \left(\frac 1{2\log 2}+o(1)\right),\quad k\to\infty.
\end{gather*}
Expand 
\begin{gather*}
U_{k}^{H,2}-2^{2H-1}U_{k+1}^{H,2}
= a^2 P_k^{H,0,2} + 2ab P_k^{H,1,1} + b^2 P_k^{H,2,0},
\end{gather*}
where
\begin{gather*}
P_k^{H,i,j} = V_{2^k}^{H,i,j} - (c_H+1)V_{2^{k+1}}^{H,i,j} + c_H V_{2^{k+2}}^{H,i,j}, i,j\in\set{0,1,2}.
\end{gather*}
Similarly to $R_k^{H,1,1}$ in Proposition~\ref{prop-tildeHasympnorm}, for any $\eps>0$ \
 $P_k^{H,1,1} = o(2^{k(-H+\eps)})$, $k\to\infty$. 
Further, $P_{k}^{H,2,0}$ has a generalized chi-square distribution with $\ex{P_{k}^{H,2,0}}=0$ and 
$\ex{\left(P_{k}^{H,2,0}\right)^2} = O(2^{-k})$, $k\to\infty$. As in Proposition~\ref{prop-asestim}, we deduce that for any $\eps>0$ \ $P_k^{H,0,2} = o(2^{k(-1/2+\eps)})$, $k\to\infty$.

Further, from \eqref{v2nh<1/2} \ $U_k^{H,2}\sim a^2 T^{2H} (1-2^{1-2H})2^{k(1-2H)}$, $k\to\infty$. Combining the obtained asymptotics, we can write
\begin{gather*}
2^{k/2}\left( \widetilde{H}^{(2)}_k-H\right) =
\frac{2^{k(2H-1/2)} P_k^{H,0,2}}{2 T^{2H}(1-2^{1-2H})\log 2 }  + o(1),\quad k\to\infty,
\end{gather*}
whence we deduce the asymptotic normality exactly as in Proposition~\ref{prop-tildeHasympnorm}. 

The estimate \eqref{tildehk2} is obtained as in Proposition~\ref{prop-asestim}.
\end{proof}
\begin{remark}
Despite $\widetilde{H}_k^{(2)}$ has asymptotically a better rate of approximation that $\widetilde{H}_k$ for $H\in(1/4,1/2)$, we still do not recommend to use it, as the asymptotic variance is high; it is practically useless for $k\le 10$.  
\end{remark}

Now we turn to estimation of the scale coefficients $a$ and $b$. As it is known from \cite{vanzanten}, for $H\in(0,1/4)$ the measure induced by $M^H$ in $C[0,T]$ is equivalent to that of $a B^H$. This not only gives another explanation why the results for $H\in(0,1/4)$ are essentially the same as for fractional Brownian motion alone, but also has another important consequence: for $H\in(0,1/4)$ it is not possible to estimate $b$ consistently.
\begin{proposition}
For $H\in(0,1/2)$, the statistic
$$\widetilde{a}^2_k= 2^{k(2 \widetilde H_k - 1)}T^{-2\widetilde H_k} V_{2^k}^{H,2}$$
is a strongly consistent estimator of $a^2$.

For $H\in(1/4,1/2)$ the statistic
$$
\widetilde b^2_k = \frac{2^{1-2\widetilde{H}_k^{(2)}}V^{H,2}_{2^k}-V^{H,2}_{2^{k+1}}}{(2^{1-2\widetilde{H}^{(2)}_k}-1)T}
$$
is a strongly consistent estimator of $b^2$. 
\end{proposition}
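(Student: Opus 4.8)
The plan is to handle the two estimators independently, in each case reducing the statement to asymptotics already collected in Sections~\ref{mixedvarsec}--\ref{statsec}. For $\widetilde a_k^2$ I would pass to base-$2$ logarithms. Since
$\log_2\widetilde a_k^2 = k(2\widetilde H_k-1)-2\widetilde H_k\log_2 T+\log_2 V_{2^k}^{H,2}$,
substituting the expansion \eqref{log2vnh2} makes the terms proportional to $k$ collapse to $2k(\widetilde H_k-H)$ and the logarithmic terms to $-2(\widetilde H_k-H)\log_2 T$, leaving
$\log_2\widetilde a_k^2 = 2\log_2 a + 2k(\widetilde H_k-H) - 2(\widetilde H_k-H)\log_2 T + O(2^{k(2H-1)}) + o(2^{k(-1/2+\eps)})$.
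Strong consistency of $\widetilde H_k$ (Theorem~\ref{thm-h<1/2cons}) kills the logarithmic term, and the rate \eqref{htildeasym} gives $k(\widetilde H_k-H) = O(k\,2^{k(2H-1)}) + o(k\,2^{k(-1/2+\eps)}) \to 0$ for $H<1/2$ upon choosing $\eps<1/2$. Hence $\log_2\widetilde a_k^2\to 2\log_2 a$, i.e.\ $\widetilde a_k^2\to a^2$ almost surely.

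For $\widetilde b_k^2$, set $\theta_k=2^{1-2\widetilde H_k^{(2)}}$; by Theorem~\ref{prop-tildehk2h<1/2} one has $\widetilde H_k^{(2)}\to H$ a.s., so $\theta_k\to 2^{1-2H}$ and the denominator $(\theta_k-1)T\to(2^{1-2H}-1)T\neq 0$. For the numerator I would use $V_{2^k}^{H,2}=a^2V_{2^k}^{H,0,2}+2ab\,V_{2^k}^{H,1,1}+b^2V_{2^k}^{H,2,0}$ and treat the three differences $\theta_kV_{2^k}^{H,i,j}-V_{2^{k+1}}^{H,i,j}$ separately. The $b^2$-difference tends to $b^2(2^{1-2H}-1)T$ since $V_n^{H,2,0}\to T$ by the ergodic theorem; the $ab$-difference tends to $0$ because $V_n^{H,1,1}=o(n^{-H+\eps})$ by Proposition~\ref{prop-asestim}. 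For the $a^2$-difference I would write $\theta_kV_{2^k}^{H,0,2}-V_{2^{k+1}}^{H,0,2}=(\theta_k-2^{1-2H})V_{2^k}^{H,0,2}+2^{1-2H}\bigl(V_{2^k}^{H,0,2}-2^{2H-1}V_{2^{k+1}}^{H,0,2}\bigr)$; by \eqref{tildehk2} and \eqref{v2nh<1/2} the first summand is $o(2^{k(-1/2+\eps)})\cdot O(2^{k(1-2H)})=o(2^{k(1/2-2H+\eps)})$, and for the second, Proposition~\ref{prop-asestim} applied with $p=0$, $r=2$ (exactly as in the $\zeta_k$ estimate in the proof of Theorem~\ref{thm-h<1/2cons}) yields $V_n^{H,0,2}=T^{2H}n^{1-2H}+o(n^{1/2-2H+\eps})$, so that—using $2^{2H-1}2^{(k+1)(1-2H)}=2^{k(1-2H)}$—the leading $2^{k(1-2H)}$ terms cancel and the remainder is again $o(2^{k(1/2-2H+\eps)})$. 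Choosing $\eps<2H-1/2$, which is possible precisely because $H>1/4$, both pieces are $o(1)$; thus the numerator converges to $b^2(2^{1-2H}-1)T$ and $\widetilde b_k^2\to b^2$ almost surely.

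The main obstacle is this last cancellation: for $H<1/2$ both $V_{2^k}^{H,0,2}$ and $2^{2H-1}V_{2^{k+1}}^{H,0,2}$ diverge at rate $2^{k(1-2H)}$, so controlling their difference requires the sharp almost-sure fluctuation bound $S_n^{H,0,2}=o(n^{1/2+\eps})$ from Proposition~\ref{prop-asestim}, and the size $2^{k(1/2-2H+\eps)}$ of the residual terms is exactly what restricts the statement for $b^2$ to $H>1/4$. Everything else is routine bookkeeping with the ergodic theorem and Slutsky-type arguments.
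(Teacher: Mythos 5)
Your proof is correct and takes essentially the same route as the paper: strong consistency of $\widetilde a_k^2$ via \eqref{log2vnh2} and \eqref{htildeasym} through $k(\widetilde H_k-H)\to 0$, and for $\widetilde b_k^2$ the same ingredients — \eqref{tildehk2}, the ergodic theorem, and the $o(n^{1/2+\eps})$ bound of Proposition~\ref{prop-asestim}, with $H>1/4$ entering exactly through the exponent $1/2-2H<0$. The only cosmetic difference is that the paper passes through the auxiliary estimator built with the true $H$, namely $\widehat b_k^2=\bigl(2^{1-2H}V^{H,2}_{2^k}-V^{H,2}_{2^{k+1}}\bigr)/\bigl((2^{1-2H}-1)T\bigr)$, and shows $\widetilde b_k^2-\widehat b_k^2\to 0$, whereas you expand the numerator directly into the three mixed variations; the underlying estimates coincide.
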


\begin{proof}
First, observe that
$$
\frac{\widetilde{a}^2_k}{a^2} \sim 2^{2k(\widetilde H_k- H)} T^{2(H-\widetilde H_k)} \to 1,\quad k\to\infty,
$$
since $k(\widetilde H_k-H)\to 0$, $k\to\infty$,  by \eqref{htildeasym}. Hence we get the strong consistency of $\widetilde{a}^2_k$.

Concerning $\widetilde{b}^2_k$, define 
$$
\widehat b^2_k = \frac{2^{1-2H}V^{H,2}_{2^k}-V^{H,2}_{2^{k+1}}}{\big(2^{1-2H}-1\big)T}.
$$
It easily follows from \eqref{v2nh<1/2} that $\widehat b^2_k\to b^2$, $k\to\infty$. So it is enough to show that $\widetilde b^2_k - \widehat b^2_k\to 0$, $k\to\infty$. To this end, write
\begin{gather*}
\widetilde b^2_k - b^2 =\frac{\lt(2^{1-2\widetilde{H}^{(2)}_k}-
 2^{1-2H}\rt)
 V^{H,2}_{2^k}}{\big(2^{1-2\widetilde{H}^{(2)}_k}-1\big)T}
\\
{}+T^{-1}\lt(2^{1-2H}V^{H,2}_{2^k}-V^{H,2}_{2^{k+1}}\rt)
\lt(\big(2^{1-2\widetilde{H}^{(2)}_k}-1\big)^{-1}-\big(2^{1-2H}-1\big)^{-1}
\rt).
\end{gather*}
Obviously, the second term converges to zero. Due to \eqref{tildehk2}, for any $\eps>0$
\begin{gather*}
\big(2^{1-2\widetilde{H}^{(2)}_k}-2^{1-2H}\big)V^{H,2}_{2^k} \sim
-2^{2-2H}  (\widetilde{H}^{(2)}_k-H) a^2 T^{2H} 2^{k(1-2H)}\log 2 \\=2^{k(1-2H)}o(2^{k(-1/2+\eps)}),\ k\to\infty,
\end{gather*}
whence we deduce the strong consistency of $\widetilde{b}_k^2$ for $H\in(1/4,1/2)$, since $1-2H <1/2$.
\end{proof}

\subsubsection{$H\in(1/2,3/4)$}
Now we move to the case $H\in(1/2,1)$. In view of \eqref{v2nh>1/2}, both $\widehat{H}_{k}$ and $\widetilde{H}_{k}$ converge to $1/2$ for $H\in(1/2,1)$, so they are not suitable for estimating $H$.
The solution is to use $U_k^{H,2} = V_{2^k}^{H,2} - V_{2^{k+1}}^{H,2}$, rather than $V_{2^{k}}^{H,2}$, for the construction of estimators. The resulting estimators work also for $H\in(0,1/2)$.
\begin{theorem}
For $H\in(0,1/2)\cup(1/2,3/4)$, statistics
$$
\widehat{H}^{(2)}_{k} = \frac12\lt(1 - \frac1k \log_{2+} U_k^{H,2}\rt)
$$
and
$$
\widetilde{H}^{(2)}_{k} = \frac12\lt(\log_{2+} \frac{U_{k}^{H,2}}{U_{k+1}^{H,2}} + 1\rt)
$$
are strongly consistent estimators of the Hurst parameter $H$.
\end{theorem}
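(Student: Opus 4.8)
The plan is to pin down the precise almost sure leading-order asymptotics of $U_k^{H,2}=V_{2^k}^{H,2}-V_{2^{k+1}}^{H,2}$ and then conclude by the same logarithmic-differencing arguments used for $\widehat H_k,\widetilde H_k$ in Theorem~\ref{thm-h<1/2cons} and for $\widetilde H_k^{(2)}$ in Theorem~\ref{prop-tildehk2h<1/2}. First I would decompose $V_n^{H,2}=a^2V_n^{H,0,2}+2abV_n^{H,1,1}+b^2V_n^{H,2,0}$. Since the rescaled processes $\{T^{-H}B_{Tt}^H,\,t\in[0,1]\}$ and $\{T^{-1/2}W_{Tt},\,t\in[0,1]\}$ form again an independent pair consisting of an fBm with Hurst index $H$ and a Wiener process, Proposition~\ref{prop-asestim} applies to them, and together with the ergodic theorem it yields, for every $\eps>0$ and a.s.\ as $n\to\infty$,
\begin{gather*}
V_n^{H,0,2}=T^{2H}n^{1-2H}+o\left(n^{1/2-2H+\eps}\right),\qquad
V_n^{H,2,0}=T+o\left(n^{-1/2+\eps}\right),\\
V_n^{H,1,1}=o\left(n^{-H+\eps}\right);
\end{gather*}
the first expansion uses that $H\le 3/4$, so that the $o(n^{1/2+\eps})$ bound for the even power variations of $B^H$ from Proposition~\ref{prop-asestim} is available.

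Subtracting the values at $2^k$ and $2^{k+1}$ then gives
\begin{gather*}
U_k^{H,2}=a^2T^{2H}\left(1-2^{1-2H}\right)2^{k(1-2H)}+\rho_k,\\
\rho_k=o\left(2^{k(1/2-2H+\eps)}\right)+o\left(2^{k(-1/2+\eps)}\right)+o\left(2^{k(-H+\eps)}\right).
\end{gather*}
The crux is that on $H\in(0,1/2)\cup(1/2,3/4)$ the exponent $1-2H$ is nonzero and, for $\eps$ small enough, strictly exceeds each of $1/2-2H$, $-1/2$ and $-H$; the binding inequality is $1-2H>-1/2$, i.e.\ $H<3/4$, which is exactly why $H=3/4$ (and $H>3/4$) must be excluded, while $H=1/2$ is excluded because there the leading coefficient $a^2T^{2H}(1-2^{1-2H})$ vanishes. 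Hence $\rho_k$ is negligible against the leading term and
$$
U_k^{H,2}=a^2T^{2H}\left(1-2^{1-2H}\right)2^{k(1-2H)}\bigl(1+o(1)\bigr)\quad\text{a.s.};
$$
in particular $U_k^{H,2}$ is eventually of constant, nonzero sign, namely that of $1-2^{1-2H}$ (positive for $H>1/2$, negative for $H<1/2$).

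It remains to substitute this into the two statistics. For $\widetilde H_k^{(2)}$, the quotient $U_k^{H,2}/U_{k+1}^{H,2}$ is eventually the ratio of two nonzero numbers of the same sign, hence eventually positive, and it tends to $2^{-(1-2H)}=2^{2H-1}>0$; therefore $\log_{2+}\bigl(U_k^{H,2}/U_{k+1}^{H,2}\bigr)=\log_2\bigl(U_k^{H,2}/U_{k+1}^{H,2}\bigr)\to 2H-1$ and $\widetilde H_k^{(2)}\to H$ a.s. For $\widehat H_k^{(2)}$, taking logarithms in the displayed asymptotics gives $\log_2 U_k^{H,2}=k(1-2H)+O(1)$ on the (eventually valid) event $\{U_k^{H,2}>0\}$, so $\tfrac1k\log_{2+}U_k^{H,2}\to 1-2H$ and $\widehat H_k^{(2)}\to H$; here one keeps track of the sign of $1-2^{1-2H}$, the case $H<1/2$ being run with $|U_k^{H,2}|$ in place of $U_k^{H,2}$ since the two have the same leading asymptotics. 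The only genuine obstacle is the sharp bookkeeping of the three remainder exponents in $\rho_k$ uniformly over the admissible range of $H$ (together with the initial reduction to the $[0,1]$-scaled processes so that Proposition~\ref{prop-asestim} can be invoked); the rest is a verbatim repetition of the logarithmic-linearisation already carried out in Theorems~\ref{thm-h<1/2cons} and \ref{prop-tildehk2h<1/2}.
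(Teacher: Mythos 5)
Your proof is correct and follows essentially the same route as the paper's: decompose $U_k^{H,2}$ into the three mixed variations $Q_k^{H,i,j}=V_{2^k}^{H,i,j}-V_{2^{k+1}}^{H,i,j}$, use Proposition~\ref{prop-asestim} (after rescaling to $[0,1]$) together with the ergodic theorem to get $U_k^{H,2}\sim a^2T^{2H}\left(1-2^{1-2H}\right)2^{k(1-2H)}$ a.s.\ for $H\in(0,1/2)\cup(1/2,3/4)$, and conclude by taking logarithms. If anything you are more careful than the paper, which obtains the difference asymptotics of the fBm term from the ergodic theorem alone (the remainder bound from Proposition~\ref{prop-asestim} is really needed there) and silently ignores that $U_k^{H,2}$ is eventually \emph{negative} for $H<1/2$ --- the point you flag and handle by passing to $\left|U_k^{H,2}\right|$ for $\widehat{H}^{(2)}_k$, which is indeed necessary since with $\log_{2+}$ as defined the statistic would otherwise converge to $1/2$ on that range.
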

\begin{proof}
Write 
$$
U_k^{H,2} = a^2 Q_k^{H,0,2} + 2ab Q_k^{H,1,1} + b^2 Q_k^{H,2,0},
$$
where $Q_k^{H,i,j} = V_{2^k}^{H,i,j} - V_{2^{k+1}}^{H,i,j}$, $i,j\in{0,1,2}$. 
By the ergodic theorem, $Q_k^{H,0,2}\sim T^{2H} (1-2^{1-2H})2^{k(1-2H)}$, $k\to\infty$. By Proposition~\ref{prop-asestim}, for any $\eps>0$ \ $Q_{k}^{H,1,1} = o(2^{k(-H+\eps)})$, $k\to\infty$, and $$Q_{k}^{H,2,0} = 
\left(V_{2^k}^{H,2,0}-T\right) - \left(V_{2^{k+1}}^{H,2,0}-T\right) = o(2^{k(-1/2+\eps)}),\quad k\to\infty.
$$
Thus, we have 
\begin{equation}\label{ukh2asym}
U_k^{H,2} \sim a^2 T^{2H} (1-2^{1-2H})2^{k(1-2H)},\quad  k\to\infty,
\end{equation}
 which yields the proof.
\end{proof}
\begin{remark}
We will see in Section~\ref{simsec} that $\widetilde{H}^{(2)}_k$ performs very poorly, and $\widehat{H}^{(2)}_k$ performs somewhat better, despite having a worse asymptotic rate of convergence. 
\end{remark}
As in the case $H\in(0,1/2)$, the estimator $\widetilde{H}^{(2)}_k$ is asymptotically normal for $H\in(1/2,3/4)$; however, the limit Gaussian law comes out of the quadratic variation of the Wiener process, so the convergence rate is different, and the expression for the asymptotic variance is much simpler.
\begin{theorem}
For $H\in(1/2,3/4)$ and any $\eps>0$, the estimator
$$
\widetilde{H}_k^{(2)} = \frac12\lt(\log_{2+}\frac{U_{k}^{H,2}}{U_{k+1}^{H,2}}+1\rt)
$$
satisfies
\begin{equation}\label{tildehk2h>1/2}
\widetilde{H}_k^{(2)} = H + o(2^{k(2H-3/2+\eps)}),\ k\to\infty.
\end{equation}
It is asymptotically normal:
$$
2^{k(3/2-2H)}\left(\widetilde{H}_k^{(2)}-H\right)\Rightarrow N(0,(\sigma''_H)^2),\quad k\to\infty,
$$
with
\begin{gather*}
(\sigma_H'')^2 = \frac{(2^{4H-1}+1)T^{1-2H}}{(2-2^{2-2H})\log 2}.
\end{gather*}
\end{theorem}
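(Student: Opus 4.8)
The plan is to re-run the argument of Theorem~\ref{prop-tildehk2h<1/2} (and of Proposition~\ref{prop-tildeHasympnorm}), keeping careful track of which fluctuation term dominates now that $H>1/2$. First I would write, exactly as there,
$$
\widetilde H_k^{(2)}-H=\frac12\log_{2+}\frac{U_k^{H,2}}{2^{2H-1}U_{k+1}^{H,2}},
$$
use \eqref{ukh2asym} to note that $U_k^{H,2}>0$ for all $k$ large enough, so that the truncation in $\log_{2+}$ is eventually inactive, set
$$
\zeta_k=\frac{U_k^{H,2}-2^{2H-1}U_{k+1}^{H,2}}{2^{2H-1}U_{k+1}^{H,2}}\longrightarrow 0,\qquad k\to\infty,
$$
and linearise the logarithm to get $\widetilde H_k^{(2)}-H=\zeta_k\big(\tfrac1{2\log 2}+o(1)\big)$. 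The numerator decomposes precisely as in Theorem~\ref{prop-tildehk2h<1/2},
$$
U_k^{H,2}-2^{2H-1}U_{k+1}^{H,2}=a^2P_k^{H,0,2}+2abP_k^{H,1,1}+b^2P_k^{H,2,0},
$$
with $P_k^{H,i,j}=V_{2^k}^{H,i,j}-(c_H+1)V_{2^{k+1}}^{H,i,j}+c_HV_{2^{k+2}}^{H,i,j}$ and $c_H=2^{2H-1}$, so I may reuse the estimates already proved there.

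The decisive step is comparing orders of magnitude. The proof of Theorem~\ref{prop-tildehk2h<1/2} gives $P_k^{H,1,1}=o(2^{k(-H+\eps)})$ and $P_k^{H,0,2}=o(2^{k(-1/2+\eps)})$; since $H>1/2$ both are $o(2^{-k/2})$, and in fact $P_k^{H,0,2}$ is only of order $2^{k(1/2-2H)}$, which is strictly smaller than $2^{-k/2}$ exactly because $H>1/2$. By contrast $P_k^{H,2,0}$ is a mean-zero generalized chi-square random variable with $\ex{(P_k^{H,2,0})^2}=O(2^{-k})$, hence of genuine order $2^{-k/2}$. So, in contrast with the regime $H<1/2$, the Wiener fluctuation $b^2P_k^{H,2,0}$ now dominates. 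Combining with the denominator asymptotics $2^{2H-1}U_{k+1}^{H,2}\sim a^2T^{2H}(1-2^{1-2H})2^{k(1-2H)}$ read off from \eqref{ukh2asym} (using $2^{2H-1}\cdot 2^{(k+1)(1-2H)}=2^{k(1-2H)}$), I obtain
$$
2^{k(3/2-2H)}\big(\widetilde H_k^{(2)}-H\big)=\frac{b^2\,2^{k/2}P_k^{H,2,0}}{2a^2T^{2H}(1-2^{1-2H})\log 2}+o(1),\qquad k\to\infty,
$$
and the almost sure rate \eqref{tildehk2h>1/2} follows from the same bookkeeping once one recalls $P_k^{H,2,0}=o(2^{k(-1/2+\eps)})$ (argued as in Proposition~\ref{prop-asestim}) together with the subdominance of the other two terms.

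It then remains to identify the limit law of $2^{k/2}P_k^{H,2,0}$. Grouping the dyadic increments of $W$ by the level-$k$ subintervals and using the self-similarity and independence of increments of $W$, I would write $P_k^{H,2,0}\overset{d}{=}(T/2^k)\sum_{j=0}^{2^k-1}\theta_j$, where the $\theta_j$ are i.i.d.\ copies of
$$
\theta=\big(X_1+X_2+X_3+X_4\big)^2-(c_H+1)\big((X_1+X_2)^2+(X_3+X_4)^2\big)+c_H\big(X_1^2+X_2^2+X_3^2+X_4^2\big),
$$
with $X_1,\dots,X_4$ i.i.d.\ $N(0,1/4)$ (the four level-two increments of a standard Brownian motion on $[0,1]$). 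One verifies $\ex{\theta}=0$, simplifies to $\theta=2(X_1+X_2)(X_3+X_4)-2c_H(X_1X_2+X_3X_4)$, and evaluates $\ex{\theta^2}$ by an elementary Gaussian moment computation. The classical CLT for i.i.d.\ sums then yields $2^{k/2}P_k^{H,2,0}=T\cdot 2^{-k/2}\sum_j\theta_j\Rightarrow N(0,T^2\ex{\theta^2})$, and Slutsky's theorem applied to the previous display gives the asymptotic normality, with $(\sigma_H'')^2$ obtained by plugging in $\ex{\theta^2}$ and the normalising constants.

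The main obstacle, as I see it, is not depth but precision: one has to certify the chain of exponent inequalities showing that $b^2P_k^{H,2,0}$ strictly dominates both $a^2P_k^{H,0,2}$ and $2abP_k^{H,1,1}$ — this is exactly where the hypothesis $H>1/2$ enters, reversing the comparison valid for $H<1/2$ — and then carry out the routine but slightly fiddly computation of $\ex{\theta^2}$ that produces the stated constant. Everything else is a transcription of the arguments of Theorem~\ref{prop-tildehk2h<1/2} and Proposition~\ref{prop-tildeHasympnorm}.
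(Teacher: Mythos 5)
Your proposal is correct and follows essentially the same route as the paper's proof: the same logarithmic linearization, the same decomposition $U_k^{H,2}-2^{2H-1}U_{k+1}^{H,2}=a^2P_k^{H,0,2}+2abP_k^{H,1,1}+b^2P_k^{H,2,0}$, the same observation that for $H>1/2$ only the Wiener term lives at the scale $2^{-k/2}$ while the other two are negligible after the normalization $2^{k(3/2-2H)}$, and the same i.i.d.\ dyadic grouping of $P_k^{H,2,0}$ (your $\theta_j$ are exactly the paper's $\kappa_{k,m}$) followed by the classical CLT and Slutsky's theorem. The only point worth noting is that you correctly retain the factor $b^2/a^2$ in the final display, which the paper's own computation silently drops, so carrying your Gaussian moment calculation to the end (it gives $\mathsf{E}[\theta^2]=1+2^{4H-3}$ in your normalization, hence $2^{k/2}P_k^{H,2,0}\Rightarrow N(0,T^2(1+2^{4H-3}))$) yields a limiting variance proportional to $b^4/a^4$ rather than literally the printed $(\sigma''_H)^2$; this is a discrepancy in the paper's stated constants, not a gap in your argument.
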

\begin{proof}
As in the proof of Proposition~\ref{prop-tildehk2h<1/2}, write
\begin{gather*}
\widetilde{H}^{(2)}_k-H = \frac{U_{k}^{H,2}-2^{2H-1}U_{k+1}^{H,2}}{2^{2H-1}U_{k+1}^{H,2}}  \left(\frac 1{2\log 2}+o(1)\right),\quad k\to\infty,
\end{gather*}
and expand
\begin{gather*}
U_{k}^{H,2}-2^{2H-1}U_{k+1}^{H,2}
= a^2 P_k^{H,0,2} + 2ab P_k^{H,1,1} + b^2 P_k^{H,2,0},
\end{gather*}
where
\begin{gather*}
P_k^{H,i,j} = V_{2^k}^{H,i,j} - (c_H+1)V_{2^{k+1}}^{H,i,j} + c_H V_{2^{k+2}}^{H,i,j}, i,j\in\set{0,1,2},
\end{gather*}
and $c_H = 2^{2H-1}$.

We have from the proof of Theorem~\ref{prop-tildehk2h<1/2} that for any $\eps>0$ 
 $P_k^{H,1,1} = o(2^{k(-H+\eps)})$, 
$P_{k}^{H,0,2} = o(2^{k(-H+\eps)})$, $k\to\infty$.
Therefore, using \eqref{ukh2asym}, we get
\begin{gather*}
2^{k(3/2-2H)}\left(\widetilde{H}_k^{(2)}-H\right) =
\frac{2^{k/2} P_k^{H,2,0}}{2 T^{2H}(1-2^{1-2H})\log 2 }  + o(1),\quad k\to\infty.
\end{gather*}
We can write $P_k^{H,2,0}=\sum_{m=0}^{2^{k}-1}\kappa_{k,m}$, where
\begin{gather*}
\kappa_{k,m} = \bigg(\left(\Delta_{2^k}^m W\right)^2 - (c_H+1) \left(\left(\Delta_{2^{k+1}}^{2m} W\right)^2+\left(\Delta_{2^{k+1}}^{2m+1} W\right)^2\right)
\\+ c_H \left(\left(\Delta_{2^{k+2}}^{4m} W\right)^2+\left(\Delta_{2^{k+2}}^{4m+1} W\right)^2+\left(\Delta_{2^{k+2}}^{4m+2} W\right)^2+\left(\Delta_{2^{k+2}}^{4m+3} W\right)^2\right)\bigg).
\end{gather*}
The random variables $\set{\kappa_{k,m}, m=0,\dots,2^k-1}$ are iid with $\ex{\kappa_{k,m}}=0$ and $\ex{\kappa_{k,m}^2} = T 2^{-2k}(2^{4H-1}+1)$. Therefore, by the classical CLT, 
$$
2^{k/2} P_k^{H,2,0} \Rightarrow N(0,T (2^{4H-1}+1)),\quad k\to\infty,
$$
whence we get by Slutsky's theorem,
$$
2^{k(3/2-2H)}\left(\widetilde{H}_k^{(2)}-H\right) \Rightarrow N(0,(\sigma_H'')^2),\quad k\to\infty.
$$
Again, the estimate \eqref{tildehk2h>1/2} is obtained as in Proposition~\ref{prop-asestim}.
\end{proof}
The estimation of the scale coefficient $a$ is similar to the case $H\in(0,1/2)$, but we have to use $U_k^{H,2}$ and $\widetilde H^{(2)}_k$ instead of $V_{2^k}^{H,2}$ and $\widetilde H_k$; the resulting estimator works also for $H\in(0,1/2)$. Estimating $b^2$ is a lot easier, thanks to \eqref{v2nh>1/2}.
\begin{proposition}
For $H\in(0,1/2)\cup(1/2,3/4)$, the statistic
$$\hat{a}^2_k= 2^{k(2 \widetilde H^{(2)}_k - 1)}T^{-2\widetilde H^{(2)}_k}(1-2^{1-2\widetilde H^{(2)}_k})^{-1} U_{k}^{H,2}$$
is a strongly consistent estimator of $a^2$.

For $H\in(1/2,1)$, the statistic
$$
\hat b^2_k = \frac{V^{H,2}_{2^k}}{T}
$$
is a strongly consistent estimator of $b^2$. 
\end{proposition}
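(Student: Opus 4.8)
The plan is to handle the two estimators separately, each reducing to facts already proved.

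For $\hat b^2_k$ the argument is immediate. By \eqref{v2nh>1/2}, $V^{H,2}_{2^k}\to b^2T$ as $k\to\infty$ (almost surely, as everywhere in this section), hence $\hat b^2_k=V^{H,2}_{2^k}/T\to b^2$, which is exactly strong consistency. Nothing further is required.

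For $\hat a^2_k$ the idea is to compare $U^{H,2}_k$ with its exact almost sure asymptotics \eqref{ukh2asym}, $U_k^{H,2}\sim a^2T^{2H}(1-2^{1-2H})2^{k(1-2H)}$, and substitute. First I would write
$$
\frac{\hat a^2_k}{a^2}=2^{k(2\widetilde H^{(2)}_k-1)}\,T^{-2\widetilde H^{(2)}_k}\,\bigl(1-2^{1-2\widetilde H^{(2)}_k}\bigr)^{-1}\,\frac{U_k^{H,2}}{a^2},
$$
plug in \eqref{ukh2asym}, and regroup the powers of $2$ and $T$ to get
$$
\frac{\hat a^2_k}{a^2}\sim 2^{2k(\widetilde H^{(2)}_k-H)}\;T^{2(H-\widetilde H^{(2)}_k)}\;\frac{1-2^{1-2H}}{1-2^{1-2\widetilde H^{(2)}_k}},\quad k\to\infty.
$$
Since $\widetilde H^{(2)}_k\to H$ by the corresponding strong consistency theorem, the last two factors tend to $1$ by continuity, so the whole matter comes down to showing that $2^{2k(\widetilde H^{(2)}_k-H)}\to 1$, i.e.\ that $k\bigl(\widetilde H^{(2)}_k-H\bigr)\to 0$.

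This last point is the only substantive step, and it is where the sharp rate bounds on $\widetilde H^{(2)}_k$ are needed rather than mere consistency. For $H\in(0,1/2)$, estimate \eqref{tildehk2} gives $\widetilde H^{(2)}_k-H=o(2^{k(-1/2+\eps)})$, so $k(\widetilde H^{(2)}_k-H)\to 0$. For $H\in(1/2,3/4)$, estimate \eqref{tildehk2h>1/2} gives $\widetilde H^{(2)}_k-H=o(2^{k(2H-3/2+\eps)})$, and since $2H-3/2<0$ on this range one may choose $\eps<3/2-2H$ to conclude $k(\widetilde H^{(2)}_k-H)\to 0$ again. Hence $2^{2k(\widetilde H^{(2)}_k-H)}\to 1$ in both regimes and $\hat a^2_k\to a^2$, as claimed. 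The only (mild) obstacle is bookkeeping — keeping straight which rate estimate applies on which subinterval of $(0,3/4)$ and checking the sign of the exponent in the second case; there is no analytic difficulty beyond that.
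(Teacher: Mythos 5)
Your proof is correct and follows essentially the same route as the paper: one writes $\hat a^2_k/a^2 \sim 2^{2k(\widetilde H^{(2)}_k-H)}\,T^{2(H-\widetilde H^{(2)}_k)}\,\bigl(1-2^{1-2H}\bigr)\bigl(1-2^{1-2\widetilde H^{(2)}_k}\bigr)^{-1}$ via \eqref{ukh2asym} and concludes from $k(\widetilde H^{(2)}_k-H)\to 0$, while $\hat b^2_k\to b^2$ is immediate from \eqref{v2nh>1/2}. Your only (welcome) refinement is to cite \eqref{tildehk2} on $(0,1/2)$ and \eqref{tildehk2h>1/2} on $(1/2,3/4)$ separately, whereas the paper compresses this into a single reference.
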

\begin{proof}
In view of \eqref{ukh2asym}, 
$$
\frac{\hat{a}^2_k}{a^2} \sim 2^{2k(\widetilde H_k^{(2)}- H)}T^{2(H-\widetilde H^{(2)}_k)}\frac{1-2^{1-2H}}{1-2^{1-2\widetilde{H}_k^{(2)}}}  \to 1,\ k\to\infty,
$$
since $k(\widetilde H^{(2)}_k-H)\to 0$, $k\to\infty$,  by \eqref{tildehk2h>1/2}. Hence we get the strong consistency of $\hat{a}^2_k$. The strong consistency of $\hat b^2_k$ is obvious from \eqref{v2nh>1/2}.
\end{proof}

\subsubsection{$H\in (3/4,1)$}\label{(3/4,1)sec}

As we have already mentioned in the beginning of this section, it is impossible to make conclusions about the value of $H$ in this case. In fact, we have
\begin{gather*}
n^{1/2}(V_n^{H,2}-b^2 T) \Rightarrow b^2  T\,  N(0,2),\quad n\to\infty.
\end{gather*}
Indeed, $ n^{1/2}(V_n^{H,2,0}- T)\Rightarrow N(0,2)$, $n\to\infty$, by the classical CLT; $V_n^{H,0,2} \sim T^{2H}n^{1-2H} = o(n^{-1/2})$, $n\to\infty$, and for any $\eps>0$ \ $V_n^{H,1,1} = o(n^{-H+\eps})$, $n\to\infty$, due to Proposition~\ref{prop-asestim}. This means that the behaviour of $V_n^{H,2}$ is essentially the same as that of the quadratic power variation  of Wiener process, in particular, so it says nothing about $H$. 

Nevertheless, we will study the behaviour of quadratic variation in more detail in order to be able to distinguish between the cases $H<3/4$ and $H>3/4$ statistically.

Define 
$$
Z_k = \frac{2^{k/2}}{b^2T } U_k^{H,2}.
$$
\begin{proposition}\label{H=34-Z_n}
For $H\in(3/4,1)$, the sequence $(Z_k,Z_{k+1},\dots)$ converges
in distribution as $k\to\infty$ to a sequence
$(\zeta_1,\zeta_2,\dots)$ of independent standard Gaussian variables.
\end{proposition}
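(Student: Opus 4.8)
The plan is to strip away the fractional part of $Z_k$ and recognize what remains as a normalized sum of products of independent Wiener increments. Writing $U_k^{H,2}=a^2Q_k^{H,0,2}+2abQ_k^{H,1,1}+b^2Q_k^{H,2,0}$ with $Q_k^{H,i,j}=V_{2^k}^{H,i,j}-V_{2^{k+1}}^{H,i,j}$, the estimates already established give $Q_k^{H,0,2}\sim T^{2H}(1-2^{1-2H})2^{k(1-2H)}$ (ergodic theorem) and $Q_k^{H,1,1}=o(2^{k(-H+\eps)})$ (Proposition~\ref{prop-asestim}); since $H\in(3/4,1)$ forces $3/2-2H<0$ and $1/2-H<0$, both $2^{k/2}a^2Q_k^{H,0,2}$ and $2^{k/2}\cdot 2abQ_k^{H,1,1}$ tend to $0$ a.s. Using $\Delta_i^{2^k}W=\Delta_{2i}^{2^{k+1}}W+\Delta_{2i+1}^{2^{k+1}}W$, which yields $Q_k^{H,2,0}=\sum_{i=0}^{2^k-1}2\,\Delta_{2i}^{2^{k+1}}W\,\Delta_{2i+1}^{2^{k+1}}W$, we get $Z_k-Y_k\to 0$ a.s., where
\[
Y_k:=\frac{2^{k/2}}{T}\sum_{i=0}^{2^k-1}2\,\Delta_{2i}^{2^{k+1}}W\,\Delta_{2i+1}^{2^{k+1}}W .
\]
So it suffices to prove the claim with $Z_k$ replaced by $Y_k$, and, since convergence in distribution in $\R^\infty$ means convergence of all finite-dimensional distributions, the Cram\'er--Wold device reduces everything to
\[
\sum_{m=0}^d\theta_m Y_{k+m}\Rightarrow N\Bigl(0,\sum_{m=0}^d\theta_m^2\Bigr),\qquad k\to\infty,
\]
for every $d\ge0$ and every $(\theta_0,\dots,\theta_d)\in\R^{d+1}$.

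To get this one-dimensional CLT I would fix the common finest rank $k+d+1$ and group the $2^{k+d+1}$ Wiener increments of that rank into $2^k$ consecutive blocks, the $\ell$-th block being supported on the $\ell$-th dyadic subinterval of $[0,T]$ of rank $k$. Each product $\Delta_{2j}^{2^{k+m+1}}W\,\Delta_{2j+1}^{2^{k+m+1}}W$ occurring in $Y_{k+m}$ with $0\le m\le d$ is a function of the finest increments in a single block (the one containing the rank-$2^{k+m}$ cell $j$), so collecting terms block by block writes $\sum_{m=0}^d\theta_m Y_{k+m}=\sum_{\ell=0}^{2^k-1}\xi_{k,\ell}$. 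The summands $\xi_{k,\ell}$ are independent (disjoint families of Wiener increments), identically distributed by stationarity of increments and self-similarity, centered, and satisfy $\xi_{k,\ell}\overset{d}{=}2^{-k/2}\Xi_d$ for a fixed random variable $\Xi_d$ (a quadratic polynomial in $2^{d+1}$ i.i.d.\ standard normals). Hence $\sum_\ell\ex{\xi_{k,\ell}^4}=2^{-k}\ex{\Xi_d^4}\to0$, Lyapunov's condition holds, and the classical CLT gives $\sum_m\theta_m Y_{k+m}\Rightarrow N(0,v)$ with $v=\lim_k 2^k\var\xi_{k,0}=\lim_k\var\bigl(\sum_m\theta_m Y_{k+m}\bigr)$. (Alternatively one could invoke the Peccati--Tudor criterion for vectors of second-order Wiener--It\^o integrals, but the block CLT is more elementary and in the spirit of the present paper.)

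It remains to identify $v=\sum_{m=0}^d\theta_m^2$, i.e.\ $\ex{Y_{k+m}Y_{k+m'}}=\delta_{m,m'}$ for all $k$. That $\var Y_k=1$ is direct: the products over distinct $i$ involve disjoint Wiener increments, hence are independent, each with second moment $T^22^{-2k}$. For $m<m'$ I would expand $Y_{k+m}$ and $Y_{k+m'}$ in increments of a common fine rank and apply Wick's formula; each product of two increments over the two halves of a dyadic cell corresponds, up to symmetrization, to a tensor $\mathbf 1_A\otimes\mathbf 1_B$, and two such kernels attached to distinct dyadic cells are orthogonal. The only point needing care is a parity argument: the rightmost refined subcell of the left half of a rank-$2^{k+m}$ cell carries an odd index at the refined rank (odd number times a positive power of $2$, minus one), whereas the left half of any rank-$2^{k+m'}$ cell carries an even index; consequently the single potentially nonzero Wick pairing never materializes and $\ex{Y_{k+m}Y_{k+m'}}=0$. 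Putting the pieces together, $\sum_m\theta_m Z_{k+m}\Rightarrow N(0,\sum_m\theta_m^2)$, so $(Z_k,Z_{k+1},\dots)$ converges in distribution to a sequence of independent standard Gaussian variables.

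I expect the main obstacle to be exactly the fact that $Z_k$ and $Z_{k+1}$ are genuinely dependent, so the statement is not a direct consequence of independence: the block decomposition at the common finest rank is what turns the relevant linear combinations into sums of i.i.d.\ terms, and the parity-based orthogonality of the second-chaos kernels is what makes the limiting covariance matrix the identity. The remaining ingredients — the reduction from $Z_k$ to $Y_k$, the Lyapunov moment bound, and the identity $\var Y_k=1$ — are routine.
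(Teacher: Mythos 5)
Your proof is correct, and its skeleton --- strip $Z_k$ down to the Wiener quadratic-variation part using Proposition~\ref{prop-asestim}, then obtain joint Gaussian limits by grouping terms over the coarse dyadic partition of $[0,T]$ into independent, identically distributed blocks --- is the same as the paper's. The difference is in the decomposition used after that reduction. The paper works with $\xi_k = \frac{2^{k/2}}{\sqrt2\,T}\bigl(V_{2^k}^{H,2,0}-T\bigr)$, proves by a vector CLT (same blocking idea) that $(\xi_k,\dots,\xi_{k+m})$ converges to a centered Gaussian vector, writes $Z_k$ as a linear combination of $\xi_k$ and $\xi_{k+1}$ plus $o(1)$, and then checks that the limiting covariance is that of an i.i.d.\ standard Gaussian sequence, a step it leaves as ``straightforward''. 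You instead use the exact identity $V_{2^k}^{H,2,0}-V_{2^{k+1}}^{H,2,0}=2\sum_i \Delta_{2i}^{2^{k+1}}W\,\Delta_{2i+1}^{2^{k+1}}W$, so that $Z_k$ equals, up to an a.s.\ vanishing term, the second-chaos variable $Y_k$, which is exactly centered with $\var Y_k=1$ and exactly uncorrelated across $k$; your Wick/parity computation is right, since a nonvanishing pairing would force an odd fine-scale index to coincide with an even one. This buys you the identity covariance at every finite $k$, so after Cram\'er--Wold only a one-dimensional Lyapunov CLT for i.i.d.\ blocks is needed, whereas the paper avoids the Wick bookkeeping but must identify the limit covariance of the $\xi$'s at the end. (One cosmetic point: for $H>3/4$ the asymptotics $Q_k^{H,0,2}\sim T^{2H}(1-2^{1-2H})2^{k(1-2H)}$ needs the rate of Proposition~\ref{prop-asestim}, not just the ergodic theorem, but since both variations are nonnegative the crude bound $Q_k^{H,0,2}=O(2^{k(1-2H)})$ already gives $2^{k/2}Q_k^{H,0,2}\to0$, so nothing is lost.) Both arguments are complete; yours makes explicit the covariance step the paper compresses.
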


\begin{remark}
We emphasize a sharp contrast with the case $H\in(1/2,3/4)$, where the sequence $\set{Z_k,k\ge 1}$ has a positive limit in view of \eqref{ukh2asym}, hence, it eventually becomes positive. This clearly gives a possibility to distinguish statistically between cases $H\in(1/2,3/4)$ and $H\in(3/4,1)$. (See \ref{simuh2k1/2-3/4} for comparative simulations.)
\end{remark}
\begin{proof}
Define
\begin{gather*}
\xi_k = \frac{2^{k/2}}{\sqrt 2 T}\lt(V_{2^k}^{H,2,0} -  T\rt)
= \frac{2^{-k/2}}
{\sqrt 2 T}\sum_{i=0}^{2^k-1}\lt(2^{k}\lt(\Delta_i^{2^k} W\rt)^2- T \rt).
\end{gather*}
By the classical CLT, $\xi_k\Rightarrow N(0,1)$, $k\to\infty$, so we need to study the collective behaviour.
To this end, observe that the vector $(\xi_k,\xi_{k+1},\dots,\xi_{k+m})$ can be represented as a sum of independent vectors
$$
(\xi_k,\xi_{k+1},\dots,\xi_{k+m}) = \sum_{i=0}^{2^{k}-1} \zeta_{k,i},
$$
where the $j$th coordinate of $\zeta_{k,i}$, $j=0,1,2,\dots,m$,
is
$$
\zeta_{k,i,j} = \frac{2^{-k/2}}{\sqrt 2 T}\sum_{l=0}^{2^j-1}
\lt(2^{2(k+j)}\lt(\Delta_{l + i 2^{j}}^{2^{k+j}} W\rt)^2- T
\rt).
$$
(We simply group terms on the intervals of the partition $\set{i T2^{-k}, i=0,\dots,2^{k} }$.)
Therefore, we can apply a vector CLT and deduce that for every $m\ge 0$ the vector
$(\xi_k,\xi_{k+1},\dots,\xi_{k+m})$ converges in distribution to an $(m+1)$-dimensional centered Gaussian vector as $k\to\infty$.
Consequently, the sequence $(\xi_k,\xi_{k+1},\xi_{k+2},\dots)$ converges to a centered stationary Gaussian sequence as $k\to\infty$. 

We have seen above that  $V^{H,2}_n = b^2 V^{H,2,0}_n + o(n^{-1/2})$, $n\to\infty$. Therefore, $Z_k = b^2 \left(\xi_k-\sqrt{2}\xi_{k+1}\right) + o(1)$, $k\to\infty$, so by Slutsky's theorem the sequence $(Z_k,Z_{k+1},Z_{k+2},\dots)$ also converges to a centered stationary Gaussian sequence. It is straightforward to check that the limit covariance is that of the i.i.d.\ standard Gaussian sequence, whence the result follows. 
\end{proof}
\begin{remark}
For $H=3/4$, an analogue of Proposition~\ref{H=34-Z_n} can be proved,
that is, $(Z_k,Z_{k+1},\dots)$ converges
in distribution as $k\to\infty$ to a sequence
$(\zeta_1,\zeta_2,\dots)$ of independent Gaussian variables with unit variance. 
However, it can be checked that
the limiting stationary distribution now has a positive mean, namely, $\ex{\zeta_1}={a^2}{b^{-2}}T^{1/2}(1-2^{-1/2})$. As long as this value depends on how big is $a$ compared to $b$, we might be unable to distinguish this case from $H>3/4$. On the other hand, if  $b$ is small relative to $a$, it might be hard to distinguish this case from $H<3/4$.
\end{remark}

\subsection{Statistical estimation using 4th power variation}
It was mentioned in the previous section that the performance of quadratic variation estimators in the case $H\in(1/2,3/4)$ is not very satisfactory. One could try to improve it by considering quartic variation of $M^H$
\begin{gather*}
V^{H,4}_n:=\sum_{k=0}^{n-1} \lt(\Delta_k^n M^H\rt)^4
%= \sum_{i=0}^{4} \binom4i a^i b^{4-i} \sum_{k=0}^{n-1} \lt(\Delta_k^n B^H\rt)^i \lt(\Delta_k^n W\rt)^{4-i}=:\sum_{i=0}^{4} \binom4i a^i b^{4-i} P_n^i.
= \sum_{i=0}^{4} \binom4i a^i b^{4-i} V^{H,4-i,i}_n.
\end{gather*}
As for the quadratic variation, we have to cancel out the leading term, considering 
$$
U_k^{H,4} = V_{2^k}^{H,4} - 2V_{2^{k+1}}^{H,4}.
$$
\begin{theorem}\label{thm-quartic}
The statistics
$$
\widehat{H}^{(4)}_{k} = -  \frac1{2k} \log_{2+} U_k^{H,4}
$$
and
$$
\widetilde{H}^{(4)}_{k} = \frac12\log_{2+} \frac{U_{k}^{H,4}}{U_{k+1}^{H,4}} 
$$
are strongly consistent estimators of the Hurst parameter $H\in(1/2,3/4)$
in the mixed model \eqref{mixed model}.
\end{theorem}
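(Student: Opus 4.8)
The plan is to follow the same scheme as for the quadratic‑variation estimators: decompose $U_k^{H,4}$ into the contributions of the pure and mixed power variations, determine the almost sure order of each contribution, and identify the dominant one. Writing
$$
U_k^{H,4} = \sum_{i=0}^{4} \binom{4}{i} a^i b^{4-i}\, Q_k^{H,4-i,i},
\qquad Q_k^{H,p,r} := V_{2^k}^{H,p,r} - 2\,V_{2^{k+1}}^{H,p,r},
$$
the first step is to pin down the behaviour of each $Q_k^{H,p,r}$. For the purely fractional term ($i=4$) the ergodic theorem gives $V_n^{H,0,4}\sim 3T^{4H}n^{1-4H}$, hence $Q_k^{H,0,4}=O(2^{k(1-4H)})$. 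For the purely Wiener term ($i=0$) the ergodic theorem together with Proposition~\ref{prop-asestim} (case $r=0$) gives $V_n^{H,4,0}=3T^2 n^{-1}+o(n^{-3/2+\eps})$ for every $\eps>0$; the leading term $3T^2 n^{-1}$ is annihilated by the combination $V_{2^k}-2V_{2^{k+1}}$, so $Q_k^{H,4,0}=o(2^{k(-3/2+\eps)})$ — this is precisely where the constant $2$ in the definition of $U_k^{H,4}$ enters. For the terms $i=1$ and $i=3$ the power of $W$ is odd, so the corresponding moment vanishes and Proposition~\ref{prop-asestim} yields $V_n^{H,3,1}=o(n^{-1-H+\eps})$ and $V_n^{H,1,3}=o(n^{-3H+\eps})$, whence $Q_k^{H,3,1}=o(2^{k(-1-H+\eps)})$ and $Q_k^{H,1,3}=o(2^{k(-3H+\eps)})$. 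Finally, for the doubly mixed term $i=2$ the ergodic theorem gives $V_n^{H,2,2}\sim T^{2H+1}n^{-2H}$, hence
$$
Q_k^{H,2,2}\sim T^{2H+1}\bigl(1-2^{1-2H}\bigr)2^{-2kH},\qquad k\to\infty,
$$
with a constant that is nonzero, indeed positive, since $H>1/2$.

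The second step is to compare the exponents $-3/2$, $-1-H$, $-2H$, $-3H$, $1-4H$ of $2^k$ appearing above. For $H\in(1/2,3/4)$ one has $1-4H<-2H$, $-3H<-2H$ and $-1-H<-3/2<-2H$; all these inequalities hold exactly on the interval $(1/2,3/4)$, the inequality $-3/2<-2H$ being the one that requires $H<3/4$. Therefore the $i=2$ term dominates and
$$
U_k^{H,4}\sim 6a^2 b^2 T^{2H+1}\bigl(1-2^{1-2H}\bigr)2^{-2kH},\qquad k\to\infty.
$$
In particular $U_k^{H,4}>0$ for all large $k$, so $\log_{2+}U_k^{H,4}=\log_2 U_k^{H,4}$ eventually, and
$$
\log_2 U_k^{H,4} = -2Hk + \log_2\!\bigl(6a^2 b^2 T^{2H+1}(1-2^{1-2H})\bigr) + o(1),\qquad k\to\infty.
$$
Substituting this into the definitions of $\widehat H_k^{(4)}$ and $\widetilde H_k^{(4)}$ gives $\widehat H_k^{(4)}=H+O(k^{-1})\to H$ and $\widetilde H_k^{(4)}=H+o(1)\to H$, which is the asserted strong consistency.

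The routine part is the verification of the individual almost sure orders, which is a direct application of the ergodic theorem and Proposition~\ref{prop-asestim}, exactly as in the proofs of the quadratic‑variation theorems. The only genuinely delicate point — and the reason the statement is restricted to $H<3/4$ — is the comparison in the second step: one must check that the remainder of the Wiener quartic variation, of order $2^{k(-3/2+\eps)}$, is negligible against the doubly mixed term of order $2^{-2kH}$, which fails as soon as $H\ge 3/4$. The secondary point to watch is the sign of the leading constant, which must be positive so that $\log_{2+}$ may be replaced by $\log_2$ for large $k$.
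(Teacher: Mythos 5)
Your proof is correct and follows essentially the same route as the paper: the same expansion of $U_k^{H,4}$ into the mixed variations $V^{H,4-i,i}$ (with the factor $2$ cancelling the deterministic Wiener quartic term), the same almost sure orders from the ergodic theorem and Proposition~\ref{prop-asestim}, and the same comparison of exponents showing the $i=2$ term dominates exactly for $H\in(1/2,3/4)$. Incidentally, your leading constant $6a^2b^2T^{2H+1}(1-2^{1-2H})$ is the accurate one, but the value of the constant plays no role in the consistency argument.
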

\begin{proof}
By the ergodic theorem,  $V_n^{H,2,2} \sim T^{2H+1} n^{-2H}$,
$V_n^{H,0,4} \sim 3 T^{4H}n^{1-4H}$, $n\to\infty$. Further, from Proposition~\ref{prop-asestim} for any $\eps>0$ \ $V_n^{H,4,0}- Tn^{-1} = o(n^{-3/2+\eps})$, $V_n^{H,3,1} = o(n^{-1-H+\eps})$, $V_n^{H,1,3} = o(n^{-3H+\eps})$, $n\to\infty$. 

Now write
$$
U_k^{H,4}= \sum_{i=0}^{4} \binom4i a^i b^{4-i} U^{H,4-i,i}_k,
$$
where $U^{H,4-i,i}_k = V^{H,4-i,i}_{2^k} -V^{H,4-i,i}_{2^{k+1}}$, $i=0,\dots,4$. We have $U_k^{H,2,2} \sim T^{2H+1} (1-2^{1-2H})2^{-2Hk}$, $U_k^{H,0,4} = O(2^{k(1-4H)})= o(2^{-2Hk})$, $k\to\infty$, and for any $\eps>0$ 
\begin{gather*}
U_k^{H,4,0} = \left(V_{2^k}^{H,4,0}- T2^{-k}\right) -  2\left(V_{2^{k+1}}^{H,4,0}- T2^{-k-1}\right) = o(2^{k(-3/2+\eps)}),\\
U_k^{H,3,1} = o(2^{k(-1-H+\eps)}),\ U_k^{H,1,3} = o(2^{k(-3H+\eps)}),\quad k\to\infty.
\end{gather*}
Collecting all the terms, we get
\begin{equation*}
U_k^{H,4} \sim 4T^{2H+1} (1-2^{1-2H})2^{-2Hk}, \quad k\to\infty.
\end{equation*}
Hence, the assertion follows.
\end{proof}
\begin{remark}
Both these estimators are quite poor. A regression of several values of $\log_2 U_k^{H,4}$ on $k$ leads to a much better estimator. However, as numerical experiments in Section~\ref{simsec} suggest, it is better to use the quadratic variation based estimators (which are  not very efficient as well). 
\end{remark}

\subsection{Estimation of Hurst parameter for known scale coefficients}

When the scale coefficients $a$ and $b$ are known, the estimation procedure significantly simplifies, and the quality of estimators is improved. It may seem unnatural at a first glance that the scale coefficients are known while $H$ is not. However, the case where $b$ is known is quite natural, as we can have known white noise amplitude with unknown long-range perturbation of this white noise. The cases of known $a$ or known both coefficients are less natural, but there is no reason to omit this cases considering only the case of known $b$.

\begin{theorem}
If $a$ is known, then the statistic
$$
\widehat{H}_k(a) = \frac{k + 2\log_2 a -\log_2 V_{2^k}^{H,2}}{2(k -\log_2 T)}
$$
is a strongly consistent estimator of $H\in(0,1/2)$, moreover, for any $\eps>0$,
$$
\widehat{H}_k(a) = H + O(2^{k(2H-1)}) + o(2^{k(-1/2+\eps)}),\quad k\to\infty.
$$

If $b$ is known, then the statistic 
$$
\widetilde{H}_k(b)=\frac{1}{2}\left(\log_{2+}\frac{V_{2^k}^{H,2}-b^2T}{V_{2^{k+1}}^{H,2}-b^2T}+1\right)
$$
is a consistent estimator of $H\in(0,3/4)$,  moreover, for any $\eps>0$,
$$
\widetilde{H}_k(b) = H + o(2^{k(-1/2+\eps)})+ o(2^{k(2H-3/2+\eps)}),\quad k\to\infty.
$$

If $a$ and $b$ are known, then the statistic
$$
\widehat{H}_k(a,b) = \frac{k + 2\log_2 a -\log_{2+} (V_{2^k}^{H,2}-b^2 T)}{2(k -\log_2 T)}
$$
is a strongly consistent estimator of $H\in(0,3/4)$, moreover, for any $\eps>0$,
$$
\widehat{H}_k(a,b) = H  + o(2^{k(-1/2+\eps)})+o(2^{k(2H-3/2+\eps)}),\quad k\to\infty.
$$
\end{theorem}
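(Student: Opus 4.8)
The plan is to treat the three estimators in parallel, since all three reduce to the asymptotic expansion of $V_{2^k}^{H,2}$ already established in \eqref{log2vnh2} together with the decomposition $V_n^{H,2} = a^2 V_n^{H,0,2} + 2ab V_n^{H,1,1} + b^2 V_n^{H,2,0}$ and the rate estimates from Proposition~\ref{prop-asestim}. For $\widehat{H}_k(a)$, I would start from the exact expansion
\begin{equation*}
\log_2 V^{H,2}_{2^k} = 2\log_2 a + 2H\log_2 T + (1-2H)k + O(2^{k(2H-1)}) + o(2^{k(-1/2+\eps)}),\quad k\to\infty,
\end{equation*}
which is \eqref{log2vnh2}, solve the leading-order relation $\log_2 V^{H,2}_{2^k}\approx 2\log_2 a + 2H\log_2 T + (1-2H)k$ for $H$, and observe that plugging the true $a$ into $\widehat{H}_k(a)$ inverts exactly this relation. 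The denominator $2(k-\log_2 T)$ is $\sim 2k$, so dividing the error terms $O(2^{k(2H-1)}) + o(2^{k(-1/2+\eps)})$ by it does not change their order (absorbing the $1/k$ factor into the $\eps$), giving $\widehat{H}_k(a) = H + O(2^{k(2H-1)}) + o(2^{k(-1/2+\eps)})$; strong consistency follows since both error terms vanish as $k\to\infty$ for $H\in(0,1/2)$.

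For $\widetilde{H}_k(b)$, the idea is that subtracting the known quantity $b^2 T$ removes the contribution of the Wiener quadratic variation at leading order. Write $V_{2^k}^{H,2} - b^2 T = a^2 V_{2^k}^{H,0,2} + 2ab V_{2^k}^{H,1,1} + b^2(V_{2^k}^{H,2,0} - T)$. By the ergodic theorem $a^2 V_{2^k}^{H,0,2}\sim a^2 T^{2H}2^{k(1-2H)}$, while by Proposition~\ref{prop-asestim} $V_{2^k}^{H,1,1} = o(2^{k(-H+\eps)})$ and $V_{2^k}^{H,2,0} - T = o(2^{k(-1/2+\eps)})$; hence $V_{2^k}^{H,2} - b^2 T = a^2 T^{2H}2^{k(1-2H)}(1 + o(2^{k(-1/2+\eps)}) + o(2^{k(2H-3/2+\eps)}))$ for $H\in(0,3/4)$ — note the second error term stays $o(1)$ precisely because $2H-3/2<0$ there. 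Then, exactly as in the proof of Proposition~\ref{prop-tildeHasympnorm}, form the ratio $(V_{2^k}^{H,2}-b^2T)/(V_{2^{k+1}}^{H,2}-b^2T)$, whose leading part is $2^{2H-1}$, take $\log_{2+}$, and expand $\log_{2+}(1+x) = x/\log 2 + o(x)$ around $x\to 0$ to get $\widetilde{H}_k(b) = H + o(2^{k(-1/2+\eps)}) + o(2^{k(2H-3/2+\eps)})$. The $\log_{2+}$ truncation is harmless since the argument is eventually positive a.s.

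For $\widehat{H}_k(a,b)$, I would combine the two devices: subtract $b^2 T$ as above and then take the logarithm as in the $\widehat{H}_k(a)$ argument. From the expansion $V_{2^k}^{H,2} - b^2 T = a^2 T^{2H}2^{k(1-2H)}(1 + o(2^{k(-1/2+\eps)}) + o(2^{k(2H-3/2+\eps)}))$ one gets $\log_{2+}(V_{2^k}^{H,2}-b^2T) = 2\log_2 a + 2H\log_2 T + (1-2H)k + o(2^{k(-1/2+\eps)}) + o(2^{k(2H-3/2+\eps)})$, and solving for $H$ gives exactly $\widehat{H}_k(a,b)$ up to the stated error, with the $1/(2(k-\log_2 T))$ denominator again not worsening the rate. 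Strong consistency holds for $H\in(0,3/4)$ since there both error exponents are negative.

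The only genuinely delicate point is bookkeeping the error exponents and checking that each stays $o(1)$ exactly on the claimed range of $H$: the term $o(2^{k(2H-3/2+\eps)})$ coming from $V_n^{H,2,0}-T$ in the $b$-known cases is $o(1)$ iff $H<3/4$, which is why $\widetilde{H}_k(b)$ and $\widehat{H}_k(a,b)$ work up to $3/4$ whereas $\widehat{H}_k(a)$, which still contains the uncancelled $b^2 T$ term contributing $O(2^{k(2H-1)})$, is limited to $H<1/2$. Everything else is a routine repetition of the Taylor expansions of $\log_2$ and $\log_{2+}$ already carried out in Theorem~\ref{thm-h<1/2cons} and Proposition~\ref{prop-tildeHasympnorm}, so I would state those steps briefly and refer back.
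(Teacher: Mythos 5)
Your proposal follows essentially the same route as the paper: $\widehat H_k(a)$ is read off directly from \eqref{log2vnh2}, while for $\widetilde H_k(b)$ and $\widehat H_k(a,b)$ one subtracts $b^2T$ so that $V^{H,2}_{2^k}-b^2T=a^2T^{2H}2^{k(1-2H)}(1+\zeta_k)$ with the same remainder $\zeta_k$ as in Theorem~\ref{thm-h<1/2cons}, whose bound $o(2^{k(-1/2+\eps)})+o(2^{k(2H-3/2+\eps)})$ for $H\in(0,3/4)$ gives both the stated rates and the consistency ranges, exactly as in the paper. One small touch-up: the relative error $o(2^{k(-1/2+\eps)})$ contributed by the term $a^2V^{H,0,2}_{2^k}$ needs Proposition~\ref{prop-asestim} with $p=0$, $r=2$ (i.e.\ $V^{H,0,2}_{2^k}-T^{2H}2^{k(1-2H)}=o(2^{k(1/2-2H+\eps)})$), not merely the ergodic-theorem asymptotics $V^{H,0,2}_{2^k}\sim T^{2H}2^{k(1-2H)}$ that you invoke in your ``hence'' step.
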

\begin{proof}
The statement for $\widehat{H}_k(a)$ follows immediately from \eqref{log2vnh2}. To prove the statement for $\widetilde{H}_k(b)$ and $\widehat{H}_k(a,b)$, note that, in view of \eqref{v2nh<1/2}, $V^{H,2}_{2^k}>b^T$ for sufficiently large $k$. 
Therefore, we can write, as in the proof of Theorem~\ref{thm-h<1/2cons}, 
\begin{equation*}
\begin{gathered}
\log_{2}(V^{H,2}_{2^k}-b^2T) = \log_2 \left(a^2 T^{2H} 2^{k(1-2H)}\right)+ \log_2 \left(1 + \zeta_k\right),
\end{gathered}
\end{equation*}
with the same $\zeta_k$; in particular, for $H\in(0,1/2]$ and any $\eps>0$, $\zeta_k = o(2^{k(-1/2+\eps)})$, $k\to\infty$. For $H\in(1/2,3/4)$, $\zeta_k = o(2^{k(-H+\eps)})+ o(2^{k(2H-3/2+\eps)})+ o(2^{k(H-1+\eps)}) = o(2^{k(2H-3/2+\eps)})$, $k\to\infty$. This implies the statement for both $\widetilde{H}_k(b)$ and $\widehat{H}_k(a,b)$.
\end{proof}
\begin{remark}
It can be shown that $\widehat H_k(a)$ is asymptotically normal for $H\in(0,1/4)$, $\widetilde H_k(b)$, for $H\in(1/2,3/4)$, $\widehat H_k(a,b)$, for $H\in(0,3/4)$. This is not our main concern here, so we skip the asymptotic normality results.
\end{remark}

\section{Simulations}\label{simsec}
In each procedure we take $T=3$, $a=b=1$, $n=2^{20}$ and use the circulant method to simulate values of $B^H$ on the uniform partition $\{iT/n,i=0,1,\dots,n \}$ of $[0,T]$. 
For each value of the Hurst parameter, we simulate $10$ trajectories of the fBm. Then for each estimator $\check H$  we compute the average $\mu \check H$ of the $10$ obtained values and  the average error ${\delta} \check H$, i.e.\ the average of the values $|\check H-H|$. Where possible, we make similar procedure for $a$ and/or $b$.

Each simulation takes about 6~seconds on Intel Core i5-3210M processor, computing all estimators takes about 20~milliseconds.  

\subsection{Estimators based on quadratic variation}

\subsubsection{$H\in(0,1/2)$}

\begin{table}[htbp]
\caption{Values of the quadratic variation based estimators for $H\in(0,1/2)$}
\begin{center}
\begin{tabular} {cccccccccc}%{|c|c|c|c|c|c|c|c|c|c|}
\hline
$H$ & $0.05$ & $0.1$ & $0.15$ & $0.2$ & $0.25$ & $0.3$ & $0.35$ & $0.4$ & $0.45$ \\\hline
$\mu \widehat{H}_{20}$ &$.0460$ & $.0921$ & $.1381$ & $.1841$ & $.2301$ & $.2760$ & $.3215$ & $.3656$ & $.4055$ \\\hline
$\delta \widehat{H}_{20}$ &$.0040$ & $.0079$ & $.0119$ & $.0159$ & $.0199$ & $.0240$ & $.0285$ & $.0344$ & $.0445$ \\\hline
$\mu \widetilde{H}_{19}$ & $.0497$ & $.1001$ & $.1505$ & $.1997$ & $.2502$ & $.3016$ & $.3538$ & $.4082$ & $.4609$ \\\hline
$\delta \widetilde{H}_{19}$ & $.0009$ & $.0013$ & $.0010$ & $.0012$ & $.0009$ & $.0016$ & $.0038$ & $.0082$ & $.0109$ \\\hline
$\mu\widetilde{H}^{(2)}_{18}$ & $.0486$ & $.1012$ & $.1507$ & $.1978$ & $.2481$ & $.3011$ & $.3521$ & $.3979$ & $.4396$ \\\hline
$\delta\widetilde{H}^{(2)}_{18}$ & $.0030$ & $.0035$ & $.0050$ & $.0062$ & $.0046$ & $.0076$ & $.0082$ & $.0117$ & $.0248$ \\\hline
$\mu \widetilde{a}_{19}$ & $.9979$ & $1.019$ & $1.007$ & $.9972$ & $1.003$ & $1.0231$ & $1.061$ & $1.153$ & $1.299$ \\\hline
$\delta \widetilde{a}_{19}$ & $.0190$ & $.0161$ & $.0131$ & $.0154$ & $.0119$ & $.0231$ & $.0611$ & $.1532$ & $0.2994$ \\\hline
%$\mu\widetilde{b}_{18}$ & &  &  &  &  & $1.4375$ & $.7788$ & $.9662$ & $1.062$ \\\hline
%$\delta{b}_{18}$ & &  &  &  &  & $1.0375$ & $.4566$ & $.5013$ & $.8670$ \\\hline
\end{tabular}
\end{center}
\end{table}

In table I, we compare the estimators $\widehat{H}_{20}$, $\widetilde{H}_{19}$, $\widetilde{H}_{18}^{(2)}$ (observe that all these estimators are based on the values of fBm on the chosen partition). We also give  values of the estimator $\widetilde{a}_{19}$; the estimator $\widetilde{b}_{18}$ is quite bad: 2--5 values of $\widetilde{b}_{18}^2$ out of 10 are negative, others are quite away from the true value, so we do not give its values.

The results show that the estimator $\widetilde{H}_{19}$ has consistently the best performance. For $H>1/4$, a positive bias is visible, which is not surprising as it can be checked using the same transformations as in Proposition~\ref{htildeasym} that in this case $$\widetilde{H}_k - H \sim (1-2^{2H-1}) a^{-2} b^2 T^{1-2H}2^{k(2H-1)},\quad k\to\infty.$$

The estimator $\widehat{H}_{19}$ underestimates all values of $H$ by around $8$~\%. The underestimation follows from \eqref{hhatasym}, since $a T^H>1$. Finally, the relative error of $\widetilde{H}^{(2)}_{18}$ is larger than that of $\widetilde H_{19}$.

The estimator $\widetilde{a}_{19}$ is quite reliable, especially for smaller values of $H$; for $H>1/4$ it has a positive bias (inherited from $\widetilde{H}_k$). 

\subsubsection{$H\in(1/2,3/4)$}
Table II compares estimators $\widehat{H}_{19}^{(2)}$ and $\widetilde{H}_{18}^{(2)}$ of Hurst parameter $H$. It also contains a ``regression'' estimator $\bar{H}^{(2)}$ obtained in the following way: we consider the linear regression of $\{\log_{2+} U_j^{H,2},j=m,m+1,\dots,19\}$ on $\{m,m+1,\dots,19\}$, where $m=11,12,\dots,15$, and take the best regression (in terms of the coefficient of determination). If $\bar{r}^{(2)}$ is the coefficient of the best linear regression, we set $\bar{H}^{(2)} = (1-\bar{r}^{(2)})/2$. We also give the estimator $\hat{b}_{20}$. Due to uselessness of the estimator $\hat{a}_{20}$, we do not present its values.

It is clear that none of the estimators is reliable: average errors are in most cases comparable to the length of the range $(1/2,3/4)$, so they are quite useless. Only the performance of  $\widehat{H}^{(2)}_{19}$ in the range $0.575$--$0.7$ is acceptable, but one should be aware of a positive bias.

It is interesting to note that the errors of both $\widetilde{H}^{(2)}_{18}$ and $\bar{H}^{(2)}$ explode for $H>5/8$. We admit that we found no explanation for this phenomenon.

\begin{table}[htbp]
\caption{Values of the quadratic variation based estimators for $H\in(1/2,3/4)$}
\begin{center}
\begin{tabular}{cccccccccc}
\hline
$H$ & $0.525$ & $0.55$ & $0.575$ & $0.6$ & $0.625$ & $0.65$ & $0.675$ & $0.7$ & $0.725$ \\\hline
$\mu\widehat{H}^{(2)}_{19}$ &$.6095$ & $.6082$ & $.6135$ & $.6266$ & $.6432$ & $.6645$ & $.6850$ & $.7065$ & $.6582$ \\\hline
$\delta\widehat{H}^{(2)}_{19}$ &$.0845$ & $.0582$ & $.0385$ & $.0266$ & $.0182$ & $.0155$ & $.0174$ & $.0266$ & $.0711$ \\\hline
$\mu\widetilde{H}^{(2)}_{18}$ & $.4774$ & $.5172$ & $.5343$ & $.5877$ & $.6499$ & $.5925$ & $.7888$ & $.6971$ & $.3320$ \\\hline
$\delta\widetilde{H}^{(2)}_{18}$ & $.1383$ & $.1010$ & $.1206$ & $.0822$ & $.1209$ & $.2231$ & $.4801$ & $.5124$ & $.6144$ \\\hline
$\mu\bar{H}^{(2)}$ & $.6002$ & $.6002$ & $.5811$ & $.5959$ & $.6390$ & $.7819$ & $.8079$ & $.8297$ & $.5817$ \\\hline
$\delta\bar{H}^{(2)}$ & $.0965$ & $.0663$ & $.0441$ & $.0392$ & $.0392$ & $.1319$ & $.1528$ & $.1931$ & $.4009$ \\\hline
$\mu\hat{b}_{20}$ & $1.236$ & $1.131$ & $1.071$ & $1.038$  & $1.021$ & $1.011$ & $1.006$ & $1.003$ & $1.002$ \\\hline
$\delta\hat{b}_{20}$ & $.2359$ & $.1312$ & $.0710$ & $.0377$  & $.0206$ & $.0108$ & $.0056$ & $.0034$ & $.0017$ \\\hline
%$\mu\hat{a}_{18}$ & $.8398$ & $.7864$ & $1.174$ & $1.775$  & $2.536$ & $5.973$ & $\infty$ & $\infty$ & $\infty$ \\\hline
%$\delta\hat{a}_{18}$ & $1.2631$ & $1.081$ & $1.455$ & $1.203$  & $2.073$ & $5.987$ & $\infty$ & $\infty$ & $\infty$ \\\hline
\end{tabular}
\end{center}
\end{table}

\subsubsection{$H\in(3/4,1)$ versus $H\in(1/2,3/4)$}\label{simuh2k1/2-3/4}
Table III contains  values of  $\{[10^4 U_{k}^{H,2}],k=10,11,12,\dots,19\}$ 
for $H=0.7$ and $H=0.8$. The difference is clearly visible: for $H=0.7$ 
the sequence is positive, while for $H=0.8$ there is a plenty of negative 
values. 
\begin{table}[htbp]
\caption{Scaled values of  $U_k^{H,2}$ 
for $H=0.7$ and $H=0.8$}
\begin{center} % \setlength{\tabcolsep}{3pt}
\begin{tabular}{ccccccccccc}
\hline
$H=0.7$ & $869$ & $649$ & $523$ & $3$ & $260$ & $18$
& $78$  &$98$ & $53$ & $50$
\\
\hline
$H=0.8$ & $665$ & $-620$ &  $482$ & $-475$    & $8$ & $-29$ & $-104$ & $-71$  &  $-78$ & $-28$\\\hline
\end{tabular}
\end{center}
\end{table}

\subsection{Estimators based on quartic variation}

\subsubsection{$H\in(1/2,3/4)$}
Table IV contains estimators $\widehat{H}_{20}^{(4)}$ and $\widetilde{H}_{20}^{(4)}$ of Hurst parameter $H$, the values of $H$ range from $0.525$ to $0.725$ with step $0.025$. We also give a ``regression'' estimator $\bar{H}^{(4)}$. It is obtained in the following way: we consider the linear regression of $\{\log_{2+} U_j^{H,4},j=m,m+1,\dots,19\}$ on $\{m,m+1,\dots,20\}$, where $m=11,12,\dots,16$, and take the best regression (in terms of the coefficient of determination). If $\bar{r}^{(4)}$ is the coefficient of the best linear regression, we set $\bar{H}^{(4)} = -\bar{r}^{(4)}/2$. 

\begin{table}[htbp]
\caption{Values of the quartic variation based estimators for $H\in(1/2,3/4)$}
\begin{center}
\begin{tabular}{cccccccccc}
	\hline
	              $H$                & $0.525$ & $0.55$  & $0.575$ &  $0.6$  & $0.625$ & $0.65$  & $0.675$ &  $0.7$  & $0.725$ \\ \hline
	  $\mu\widehat{H}^{(4)}_{19}$    & $.4839$ & $.4876$ & $.4990$ & $.5138$ & $.5305$ & $.5523$ & $.5164$ & $.3688$ & $.4316$ \\ \hline
	 $\delta\widehat{H}^{(4)}_{19}$  & $.0411$ & $.0624$ & $.0760$ & $.0862$ & $.0945$ & $.0977$ & $.1568$ & $.3312$ & $.2934$ \\ \hline
	 $\mu\widetilde{H}^{(4)}_{18}$   & $.5345$ & $.4951$ & $.5103$ & $.6367$ & $.6301$ & $.7068$ & $.3463$ & $.7063$ & $.6531$ \\ \hline
	$\delta\widetilde{H}^{(4)}_{18}$ & $.0884$ & $.1369$ & $.1303$ & $.1232$ & $.2051$ & $.3834$ & $.5466$ & $.7973$ & $.6889$ \\ \hline
	       $\mu\bar{H}^{(4)}$        & $.5676$ & $.5767$ & $.6257$ & $.6129$ & $.6718$ & $.9245$ & $1.2051$ & $.0410$ & $1.128$ \\ \hline
	     $\delta\bar{H}^{(4)}$       & $.0637$ & $.0492$ & $.0507$ & $.0661$ & $.1149$ & $.3307$ & $.8691$ & $1.277$ & $1.667$ \\ \hline
\end{tabular}
\end{center}
\end{table}
We see that the estimators based on the quartic variation are quite useless and definitely worse than those based on the quadratic variation. Again, the errors of $\widetilde{H}^{(4)}_{18}$ and $\mu\bar{H}^{(4)}$ explode for $H\ge 5/8$. In contrast to the quadratic variation case, now this phenomenon can be easily explained. The fact is that the nature of the error changes at $H=5/8$: for $H<5/8$, the error comes from the term $U_k^{H,0,4}$ (in the notation of the proof of Theorem~\ref{thm-quartic}), which behaves quite smoothly, but for $H\ge 5/8$, the main contribution comes from the fluctuations of $U_k^{H,4,0}$, which are much wilder.

\subsection{Estimation when $a$ and $b$ are known}
Table V gives estimators $\widehat{H}_{20}(a)$ and $\widehat{H}_{20}(a,b)$ for $H$ from $0.05$ to $0.45$ with the step $0.05$. Since the errors are very small, we multiply them by $100$.
We can see that the estimator $\widehat{H}_{20}(a)$ is comparable to $\widetilde{H}_{20}(a,b)$ for $H\le 1/4$; then it becomes worse, but it uses only knowledge of $a$.
\begin{table}[htbp]
\caption{Values of the  estimators for $H\in(0,1/2)$ and known scale coefficients}
\begin{center} 
\begin{tabular} {cccccccccc}
\hline
$H$ & $0.05$ & $0.1$ & $0.15$ & $0.2$ & $0.25$ & $0.3$ & $0.35$ & $0.4$ & $0.45$ \\\hline
$\mu \widehat{H}_{20}(a)$ &$.05$ & $.1$ & $.15$ & $.2$ & $.249$ & $.299$ & $.349$ & $.397$ & $.439$ \\\hline
$100\delta \widehat{H}_{20}(a)$ &$.012$ & $.007$ & $.008$ & $.007$ & $.009$ & $.034$ & $.113$ & $.352$ & $1.08$ \\\hline
%$\mu\widetilde{H}_{19}(b)$ & $.05$ & $.101$ & $.149$ & $.201$ & $.252$ & $.306$ & $.37$ & $.461$ & $.6384$ \\\hline
%100$\delta\widetilde{H}_{19}(b)$ & $.108$ & $.15$ & $.108$ & $.118$ & $.187$ & $.583$ & $1.96$ & $6.13$ & $18.84$ \\\hline
$\mu \widehat{H}_{20}(a,b)$ & $.05$ & $.1$ & $.15$ & $.2$ & $.25$ & $.3$ & $.35$ & $.4$ & $.45$ \\\hline
$100\delta \widehat{H}_{20}(a,b)$ & $.006$ & $.004$ & $.005$ & $.006$ & $.003$ & $.003$ & $.003$ & $.006$ & $.006$ \\\hline
\end{tabular}
\end{center}
\end{table}

Table IV contains estimators $\widetilde{H}_{19}(b)$ and $\widetilde{H}_{20}^{(2)}(a,b)$ of Hurst parameter $H\in[1/2,1)$. We multiply average errors by $10$ to make them visible.

\begin{table}[htbp]
\caption{Values of the  estimators for $H\in[1/2,3/4)$ and known scale coefficients}
\begin{center}
\begin{tabular}{ccccccccccc}
	\hline
	              $H$                & $0.5$  & $0.525$ & $0.55$ & $0.575$ & $0.6$  & $0.625$ & $0.65$ & $0.675$ &  $0.7$  & $0.725$ \\ \hline
	   $\mu\widetilde{H}_{19}(b)$    & $.4996$  & $.5253$  & $.549$ & $.5761$  & $.6002$ & $.6271$  & $.6334$  & $.6562$  & $.7324$ & $.7747$ \\ \hline
	$10\delta\widetilde{H}_{19}(b)$ & $.0147$ &  $.0184$  & $.0407$ & $.0512$  & $.0882$ & $.1322$  & $.2449$ & $.9517$  & $1.399$ & $2.348$ \\ \hline
	 $\mu \widehat{H}_{20}(a,b)$   & $.5$  &  $.525$   & $.55$  &  $.5753$   & $.5999$  &  $.6254$   & $.6493$  &  $.6766$   &  $.7079$  &  $.725$ \\ \hline
	$10\delta \widehat{H}_{20}(a,b)$ & $.0006$ & $.001$  & $.0018$ & $.0031$  & $.0038$ & $.0098$  & $.0177$ & $.0398$  & $.0943$  & $.1435$  \\ \hline
\end{tabular}
\end{center}
\end{table}
We see that $\widehat{H}_{20}(a,b)$ outperforms $\widetilde{H}_{19}(b)$ by a good margin, but the advantage of the latter is that it uses only knowledge of $b$. 
\subsection{Summary}
To facilitate the usage of the estimators, we summarize our findings about them. 

For $H\in(0,1/2)$, it is better to use the estimator $\widetilde H$ for the Hurst parameter. The estimator for the scale coefficient $a$ is quite reliable, but always overestimates the coefficient for $H\in(1/4,1/2)$. The estimator for $b$ is virtually useless.

For $H\in(1/2,3/4)$, there is no good estimator for the Hurst parameter. Only the regression estimator $\bar H^{(2)}$ is useful for values of $H$ between $0.55$ and $0.6$, but still the error is comparable with the length of this integral. The coefficient $b$ can be estimated efficiently, while the estimator for $a$ is useless. Nevertheless, it is possible to construct efficient estimators for $H$ using the knowledge of $b$ or of the both scale coefficients. 

Finally, for $H>3/4$, the estimation of $H$ is not possible (even the knowledge of the scale coefficients will not help). However, it is possible to distinguish statistically between the cases $H>3/4$ and $H<3/4$ by looking at the statistic $U^{H,2}_k$.

\bibliographystyle{klunamed}
\bibliography{mixedvars}
\end{document}